\documentclass[12pt]{article}

\usepackage[margin=2cm,top=2cm,bottom=2cm]{geometry}
\usepackage[utf8]{inputenc}
\usepackage[T1]{fontenc}
\usepackage{xurl}
\usepackage{booktabs}
\usepackage{amsfonts}
\usepackage{microtype}

\usepackage{amsmath,amssymb,amsthm,mathtools,xcolor}
\usepackage{graphicx}
\usepackage{float}
\usepackage{algorithm,algpseudocode}
\usepackage[hidelinks,hypertexnames=false]{hyperref}
\allowdisplaybreaks
\usepackage{changepage}
\usepackage{indentfirst}

\newtheorem{theorem}{Theorem}[section]
\newtheorem{lemma}[theorem]{Lemma}
\newtheorem{proposition}[theorem]{Proposition}
\theoremstyle{remark}
\newtheorem{remark}[theorem]{Remark}
\theoremstyle{plain}
\numberwithin{equation}{section}

\newcommand{\R}{\mathbb R}
\newcommand{\Cl}{\partial_{\mathrm C}}
\newcommand{\norm}[1]{\lVert #1\rVert}
\newcommand{\SPD}{\mathbb S_{++}^{2}}
\newcommand{\proofstep}[2]{%
  \par\addvspace{\medskipamount}\noindent
  \underline{\textit{Step~#1: #2.}}\nobreak\hspace{0.6em}\ignorespaces
}
\title{BFGS can converge to non-Clarke-stationary points with positive probability}
\author{\large Lexiao Lai\thanks{\url{lai.lexiao@hku.hk}, Department of Mathematics, The University of Hong Kong, Hong Kong.}}
\date{}

\hypersetup{
  pdftitle={BFGS can converge to non-Clarke-stationary points with positive probability},
  pdfauthor={Lexiao Lai},
  pdfsubject={A semialgebraic counterexample to almost-sure stationarity of nonsmooth BFGS},
  pdfkeywords={nonsmooth optimization, BFGS, Clarke stationarity, semialgebraic functions, computer-assisted proof}
}

\begin{document}

\maketitle
\vspace*{-5mm}

\begin{center}
\textbf{Abstract}
\end{center}
\vspace*{-4mm}
\begin{adjustwidth}{0.2in}{0.2in}
We construct a coercive, locally Lipschitz,
semialgebraic function on $\R^2$ for which BFGS with Armijo--Wolfe
line search converges to a non-Clarke-stationary point from a nonempty
open set of initial points and inverse-Hessian estimates. This disproves the almost-sure stationarity assertion in the 2013 challenge
of Lewis and Overton \cite[Challenge~7.1(2)]{LewisOverton2013}. The construction and proof combine analytical arguments with computer-assisted verification of explicit inequalities.\footnote{Verification code and Lean formalization: \url{https://github.com/LexiaoLai/Nonsmooth-BFGS-Nonstationary}.}

\noindent\textbf{Keywords}: Nonsmooth optimization, BFGS, Clarke stationarity,
semialgebraic functions, computer-assisted proof.
\end{adjustwidth}

\section{Introduction and main result}
We consider the unconstrained minimization problem
$\min_{x\in\R^n} f(x)$, where $f:\R^n\to\R$ is locally Lipschitz
and may be nonsmooth and nonconvex. The
Broyden--Fletcher--Goldfarb--Shanno (BFGS) method approximates curvature
using gradient differences without evaluating Hessians
\cite{Broyden1970,Fletcher1970,Goldfarb1970,Shanno1970}.
Its numerical effectiveness and convergence properties in smooth
optimization have been studied extensively \cite{DennisMore1977}.
For twice continuously differentiable convex objectives with bounded
initial sublevel sets, BFGS function values converge to the minimum
under an Armijo--Wolfe line search \cite{Powell1976}.
Explicit global convergence rates are available for twice differentiable,
strongly convex objectives with Lipschitz gradients when the Armijo
parameter is below $1/2$ \cite[Theorem~4.1]{JinJiangMokhtari2024}.
For convex objectives with Lipschitz gradients, convergence rates have also been
established for the smallest gradient norm reached when the objective
is bounded below, and for the function-value gap when the initial
sublevel set is bounded \cite[Theorem~3.1]{DingYangZhou2026}.

BFGS has also shown empirical success on nonsmooth
objectives \cite{LewisOverton2013,CurtisMitchellOverton2017}.
Theoretical support for this behavior includes convergence results for the
Euclidean norm in two dimensions with
exact line search, and for $|x|$ with a particular inexact line search
using a zero Armijo parameter
\cite[Theorems~3.2 and~5.2]{LewisOverton2013}.
A doubling/bisection Armijo--Wolfe line search also terminates finitely
for bounded-below, locally Lipschitz, semialgebraic objectives along
directions with negative directional derivative
\cite[Theorem~4.7]{LewisOverton2013}.
These results and the numerical evidence motivate the broader question
posed in \cite[Challenge~7.1(2)]{LewisOverton2013}: for locally Lipschitz,
semialgebraic functions with bounded sublevel sets, is every cluster
point generated by BFGS with this line search Clarke stationary almost
surely under Gaussian initialization of the point and Wishart
initialization of the inverse-Hessian estimate?

Further results extend the analysis to broader nonsmooth classes.
For convex objectives satisfying additional structural assumptions,
BFGS function values converge to the minimum \cite{GuoLewis2018};
in particular, convergence holds for the Euclidean norm in every dimension
\cite[Corollary~4.2]{GuoLewis2018}.
For a class of unbounded-below convex functions with two affine pieces,
BFGS detects a direction of unbounded descent after finitely many
iterations, provided that the iterates remain differentiability points
\cite[Theorem~IV.1 and Remark~IV.1]{XieWaechter2017}.
For a class of piecewise-differentiable objectives, convergent sequences
initialized sufficiently near a minimizer have Clarke-stationary limits
under additional structural and asymptotic assumptions on the objective,
iterates, and quasi-Newton matrices
\cite[Theorem~3.1]{Gebken2026}.
These results do not settle the general nonsmooth convex case;
convergence for $(x_1,x_2)\mapsto x_1^2+|x_2|$ is identified as unresolved in
\cite[Section~1]{Gebken2026}.

Existing counterexamples show that BFGS and related methods can fail to
converge or converge to nonminimizers.
For smooth nonconvex objectives, examples exhibit cycling with Wolfe line
searches \cite{Dai2002} or failure with exact line searches, including
one with bounded sublevel sets \cite{Mascarenhas2004,Mascarenhas2014}.
For a prescribed initial point and inverse-Hessian estimate, BFGS with
Armijo--Wolfe line search generates bounded iterates with nonstationary
cluster points on a polynomial objective \cite[Sections~3.1--3.4]{Dai2013}.
The objective can be modified to have bounded sublevel sets without
changing the iterates.
For nonsmooth objectives, BFGS with an Armijo--Wolfe line search can
converge to a Clarke-stationary point admitting linear descent
\cite[Proposition~5.3]{LewisZhang2015}.
Scaled memoryless BFGS can converge to non-Clarke-stationary
points on an unbounded-below convex piecewise-affine objective
\cite[Corollary~3.12]{AslOverton2021}.
These examples do not establish failure of Clarke stationarity on an
open set of initializations for BFGS with the specified Armijo--Wolfe
line search on locally Lipschitz, semialgebraic objectives with bounded
sublevel sets.

In this work, we establish failure of BFGS that persists under perturbations of both the initial
point and the inverse-Hessian estimate. A single open set of initializations
gives convergence to the same non-Clarke-stationary point for every
line-search parameter pair in a fixed range. This set has positive
probability under Gaussian--Wishart initialization, disproving the
almost-sure stationarity assertion in
\cite[Challenge~7.1(2)]{LewisOverton2013}.

\subsection{Algorithm and main result}
We consider BFGS with the unit-trial doubling/bisection line search and strict
Armijo--Wolfe tests specified in
\cite[Eqs.~(4.2)--(4.3), Algorithm~4.6]{LewisOverton2013}; see Algorithm~\ref{alg:bfgs} below.
At iteration $k$, $x^k$ is the current point and $H^k\succ0$ is the
positive definite inverse-Hessian estimate. The constants
$c_1$ and $c_2$ are the Armijo and Wolfe parameters, respectively,
and $I$ denotes the identity matrix.

\begin{algorithm}[!ht]
\caption{BFGS with Armijo--Wolfe line search}
\label{alg:bfgs}
\small
\textbf{Input:} $f$, $x^{0}\in\R^n$, $H^{0}\succ0$, and $0<c_1<c_2<1$.
\begin{algorithmic}[1]
\State $g^{0}\gets\nabla f(x^{0})$.
\For{$k=0,1,2,\ldots$}
  \State Stop if $g^{k}=0$; set $p^{k}=-H^{k}g^{k}$.
  \State Set $\varphi^{k}(\alpha)=f(x^{k}+\alpha p^{k})$ and $(\ell,u,\alpha)=(0,+\infty,1)$.
  \Statex \hspace{\algorithmicindent}\begin{minipage}{0.91\linewidth}
  Accept a trial step size $\alpha$ when both strict tests hold:
  \begin{align}
   \varphi^{k}(\alpha)&<\varphi^{k}(0)+c_1\alpha (g^{k})^Tp^{k},
     &&\text{(Armijo)},\label{eq:armijo}\\
   (\varphi^{k})'(\alpha)&>c_2(g^{k})^Tp^{k},
     &&\text{(Wolfe)}.\label{eq:wolfe}
  \end{align}
  The Wolfe test fails if $(\varphi^{k})'(\alpha)$ does not exist.
  \end{minipage}
  \While{either \eqref{eq:armijo} or \eqref{eq:wolfe} fails}
    \State If \eqref{eq:armijo} fails, set $u=\alpha$; otherwise set $\ell=\alpha$.
    \State Set $\alpha=(\ell+u)/2$ if $u<+\infty$, and $\alpha=2\ell$ otherwise.
  \EndWhile
  \State $\alpha^{k}\gets\alpha$, $s^{k}\gets\alpha^{k}p^{k}$, $x^{k+1}\gets x^{k}+s^{k}$.
  \State $g^{k+1}\gets\nabla f(x^{k+1})$, $y^{k}\gets g^{k+1}-g^{k}$.
  \State $H^{k+1}\gets\mathcal B(H^{k},s^{k},y^{k})$, where
  \Statex \hspace{\algorithmicindent}\begin{minipage}{0.91\linewidth}
  \begin{equation}\label{eq:update}
   \mathcal B(H,s,y)=\left(I-\frac{sy^T}{s^Ty}\right)
   H\left(I-\frac{ys^T}{s^Ty}\right)+\frac{ss^T}{s^Ty}.
  \end{equation}
  \end{minipage}
\EndFor
\end{algorithmic}
\end{algorithm}

Whenever $f$ is differentiable at the accepted point $x^{k+1}$,
the Wolfe condition \eqref{eq:wolfe} gives
$(s^{k})^Ty^{k}>\alpha^{k}(c_2-1)(g^{k})^Tp^{k}>0$.
The update therefore preserves positive definiteness of inverse-Hessian estimates, so each subsequent
nonzero gradient generates a descent direction. To state the main result, recall that for locally Lipschitz $f$,
the Clarke subdifferential $\Cl f(x)$ is
the closed convex hull of limiting gradients at differentiability
points approaching $x$ \cite{Clarke1975,Clarke1990}.
A point $x$ is Clarke stationary when $0\in\Cl f(x)$.
A function is semialgebraic if its graph is a finite Boolean combination
of sets defined by polynomial equalities and inequalities
\cite[Sections~2.1--2.2]{BochnakCosteRoy1998}.
We use $\norm{\cdot}$ for the Euclidean norm and call $f$ coercive if
$f(x)\to+\infty$ as $\norm{x}\to\infty$. Let $\SPD\subset \mathbb R^{2\times2}$ be the set of
symmetric positive definite $2\times2$ matrices. The following is the main result of this paper.

\begin{theorem}\label{thm:main}
There exist a coercive, locally Lipschitz, semialgebraic function
$f:\R^2\to\R$ with $0\notin\Cl f(0)$ and a nonempty open set
$\mathcal U\subset\R^2\times\SPD$ such that,
for every $(x^{0},H^{0})\in\mathcal U$ and every parameter pair $(c_1,c_2)$
satisfying
\begin{equation}\label{eq:parameter-range}
 0<c_1\le0.03<0.6\le c_2<1,
\end{equation}
Algorithm~\ref{alg:bfgs} generates a well-defined infinite sequence
with $x^{k}\to0$.
\end{theorem}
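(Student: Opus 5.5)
The plan is to build $f$ so that the BFGS dynamics near $0$ is \emph{self-similar}: after a fixed number of iterations, the iterate and inverse-Hessian estimate reproduce a rescaled copy of the initial configuration. Then a contraction argument on the normalized state gives convergence to $0$ for a whole open set of initializations.

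\textbf{Construction of $f$.} Near the origin I would take $f$ positively homogeneous of degree one, piecewise linear on finitely many sectors. Concretely, $f(x)=\max_i a_i^Tx$ on a cone around $0$, or a sector-wise affine function patched by semialgebraic pieces. I would choose the pieces so that all $a_i$ lie in an open half-plane, say with positive first coordinate. This gives $0\notin\Cl f(0)=\operatorname{conv}\{a_i\}$. Away from the origin I would add a coercive semialgebraic term, for example $\max(0,\norm{x}^2-R^2)$ or a convex function equal to $f$ on the ball of radius $R$. This term is inactive on the region visited by the iterates, so it does not affect the dynamics.

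Homogeneity gives the key invariance. Gradients are scale-invariant: $\nabla f(tx)=\nabla f(x)$ for $t>0$. The line-search tests \eqref{eq:armijo}--\eqref{eq:wolfe} are invariant under $(x,H)\mapsto(tx,tH)$ together with the same trial $\alpha$. The update \eqref{eq:update} satisfies $\mathcal B(tH,ts,y)=t\,\mathcal B(H,s,y)$. The map $\Phi:(x^k,H^k)\mapsto(x^{k+1},H^{k+1})$ is therefore $1$-homogeneous, and it descends to a map $\hat\Phi$ on the normalized state space of pairs $(x,H)$ with $\norm{x}=1$.

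\textbf{Periodic orbit and stability.} The core step is to exhibit a state $(\bar x,\bar H)$ and an integer $m$ with $\Phi^m(\bar x,\bar H)=(\lambda\bar x,\lambda\bar H)$ for some $\lambda<1$. This is a fixed point of $\hat\Phi^m$, and the sectors should be designed so that a cycle of length $m$, perhaps $m=2$ or $3$, zigzags across the kinks toward the origin.

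On an open neighborhood of this orbit, the following must stay fixed: which sector each trial point lies in, the full sequence of accept/reject decisions in the doubling/bisection search, and the differentiability of $f$ at trial and accepted points. On that neighborhood $\hat\Phi^m$ is an explicit rational map. I would then verify that $\hat\Phi^m$ maps a compact box $K$ of normalized states, with nonempty interior, into itself, and that the scale factor is at most some $\rho<1$ on $K$. Iterating yields $x^k\to0$ geometrically. Well-definedness is also checked along the way: gradients are nonzero, each line search terminates at an explicit trial, and $(s^k)^Ty^k>0$.

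Uniformity in $(c_1,c_2)$ enters only through the Armijo and Wolfe inequalities. These are monotone in the parameters, so checking the extreme cases $c_1\to0.03$, $c_2\to0.6$, and $c_2\to1$ over the compact box suffices. Taking $\mathcal U$ to be the cone over the interior of $K$, intersected with small norms, gives an open set.

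\textbf{Main obstacle.} The hardest part is the invariance $\hat\Phi^m(K)\subset K$. It requires a rigorous enclosure of a multistep nonlinear map over a four- or five-dimensional box, including the strict line-search inequalities, uniformly in the parameters. BFGS tends to let $H$ degenerate along the kink direction, which pushes the normalized state away from any fixed point. The sector slopes must therefore be tuned so that the orbit is attracting for the normalized dynamics, for instance by making the kink direction rotate between sectors. I would first search numerically for a stable periodic orbit. I would then certify the inclusion by interval arithmetic, or by explicit polynomial inequalities verified with a computer, subdividing $K$ as needed.
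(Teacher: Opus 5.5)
\textbf{There is a genuine gap.} It lies in the choice of a locally affine objective, not in the difficulty of certifying $\hat\Phi^m(K)\subset K$.

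Suppose every trial and accepted point stays in a fixed open affine sector and every line-search decision is fixed, as your plan requires so that $\hat\Phi^m$ is an explicit rational map. Then the gradients $g^k$ and accepted step sizes $\alpha^k$ are locally constant. So $H^{k+1}=\mathcal B(H^k,-\alpha^kH^kg^k,g^{k+1}-g^k)$ depends on $H^k$ alone, and $x^{k+1}=x^k-\alpha^kH^kg^k$. The dynamics therefore commutes with translations in $x$: the run from $(x^0+\delta x,H^0)$ is exactly $(x^k+\delta x,H^k)$.

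If your box $K$ existed, the cone over its interior would contain $(x^0,H^0)$ and $(x^0+\delta x,H^0)$ for small $\delta x\ne0$. Both runs would keep the pattern forever and tend to $0$, yet they stay at distance $\norm{\delta x}$, a contradiction. Equivalently, the $m$-step Jacobian is block upper triangular with identity $x$-block. Hence the normalized return map $\lambda^{-1}\Phi^m$ has eigenvalue $\lambda^{-1}>1$ with multiplicity two, on pure $x$-perturbations, which are transverse to the ray of scaled cycle states. No tuning of sector slopes, choice of norm, or subdivision of $K$ can remove this. The obstacle you name, $H$ degenerating along kinks, is not the real one.

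\textbf{The missing idea is curvature at the cycle points,} so that gradient differences respond to perturbations of $x$. Homogeneity forces $\nabla^2 f(x^i)x^i=0$ at a smooth point $x^i\ne0$. In $\R^2$ the only freedom is therefore a rank-one Hessian $\beta^iq^i(q^i)^T/\norm{x^i}^2$ with $q^i\perp x^i$. The paper's construction has three parts:
\begin{itemize}
\item It takes $\widetilde f$ homogeneous and $C^2$ away from $0$, namely a perspective $|z_1|h(z_2/|z_1|)$ in sheared coordinates, with $h$ a $C^2$ monotone Hermite interpolant satisfying $h'\ge0.01$.
\item This function matches prescribed values, gradients and Hessians at an exact four-step cycle $(x,H)\mapsto(rx,rH)$, keeps $0\notin\Cl\widetilde f(0)$, and leaves the $\beta^i$ free.
\item The $\beta^i$ are chosen so that the transverse part of the linearized return map has norm at most $0.78$ in a certified basis. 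An invariant-neighborhood argument then gives attraction to the ray.
\end{itemize}

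\textbf{A secondary issue} concerns uniformity in $(c_1,c_2)$. Monotonicity reduces it to the extreme values only when every step accepts its first trial. A trial rejected at $c_1=0.03$ or $c_2=0.6$ may be accepted for smaller $c_1$ or larger $c_2$. The paper avoids this by arranging unit-step acceptance throughout the invariant neighborhood.
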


Figure~\ref{fig:main-two-panel} shows the objective in Theorem~\ref{thm:main}
and a sequence of BFGS iterates converging to the non-Clarke-stationary origin.
Figure~\ref{fig:random-initializations} plots the function values and
iterate norms for 100 BFGS runs initialized by randomly perturbing
the initialization used in
Figure~\ref{fig:main-two-panel}.

\begin{figure}[!ht]
\centering
\includegraphics[width=\linewidth]{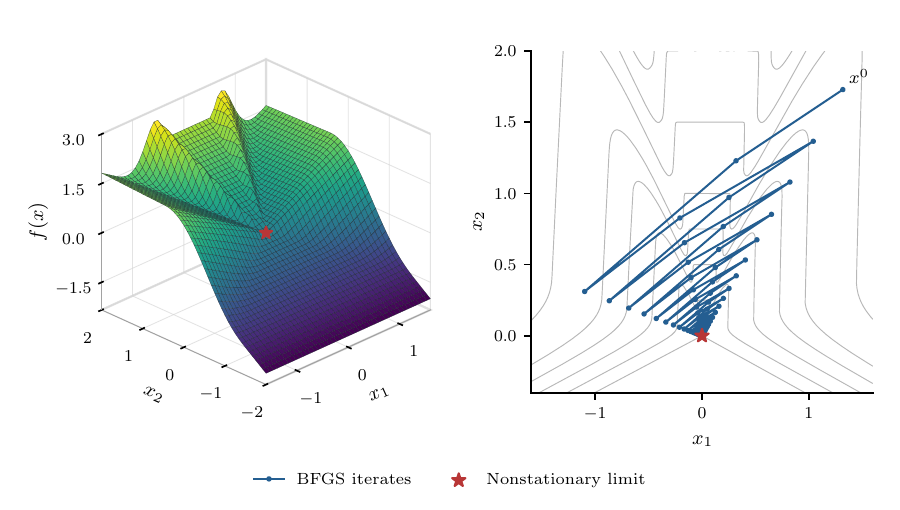}
\caption{BFGS convergence to a non-Clarke-stationary origin.
Left: the objective landscape near the cycle.
Right: the first 120 BFGS iterations with
$(c_1,c_2)=(0.01,0.9)$.}
\label{fig:main-two-panel}
\end{figure}

\begin{figure}[!ht]
\centering
\includegraphics[width=\linewidth]{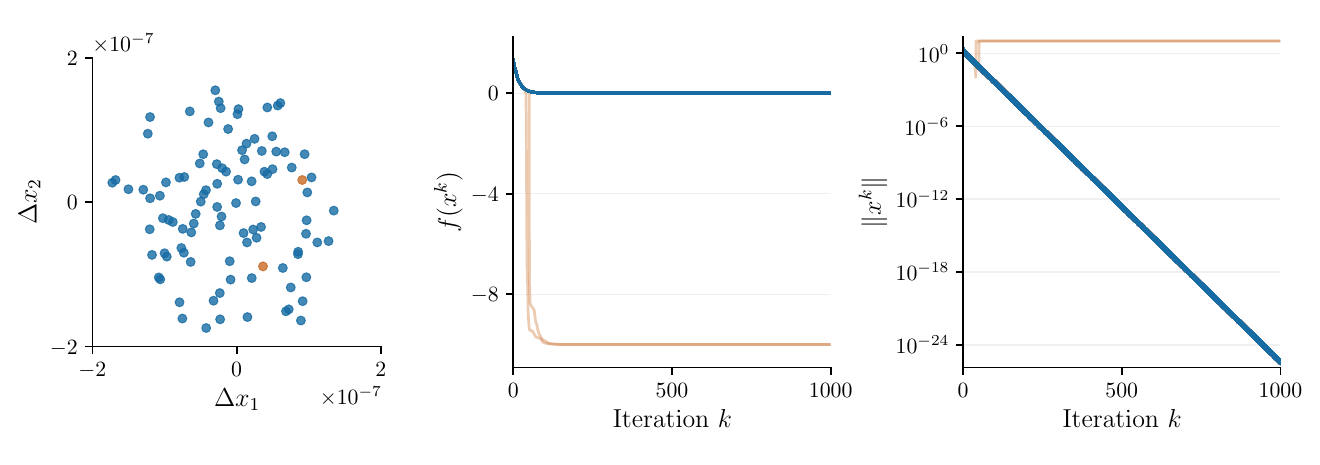}
\caption{BFGS from 100 random initializations for the objective in
Figure~\ref{fig:main-two-panel}, with $(c_1,c_2)=(0.01,0.9)$ and 1000
iterations per run. The initial states are sampled uniformly from the
five-dimensional Euclidean ball of radius $\varepsilon=2\times10^{-7}$ centered
at the constructed state $(x^0,H^0)$, in coordinates
$(x_1,x_2,H_{11},H_{12},H_{22})$.
The left panel shows the initial position displacements
$(\Delta x_1,\Delta x_2)$, while the middle and right panels show the
objective values and iterate norms, respectively. Runs converging to the
origin are shown in blue, and the remaining runs in orange.}
\label{fig:random-initializations}
\end{figure}

\begin{remark}[Positive probability of failure]\label{rem:probability}
If $x^{0}$ is a random Gaussian vector with a positive definite covariance matrix and, independently,
$H^{0}=X^TX$ with independent standard Gaussian entries in the $2\times2$
matrix $X$, every nonempty open subset of $\R^2\times\SPD$ has positive
probability. The same event $\mathcal U$ gives failure for every parameter
pair in \eqref{eq:parameter-range}, disproving the
almost-sure stationarity assertion in
\cite[Challenge~7.1(2)]{LewisOverton2013}.
\end{remark}

The proof of Theorem~\ref{thm:main} in Section~\ref{sec:proof} combines a homogeneous shrinking cycle with stability under perturbations of the initial state.
In Section~\ref{sec:cycle}, a scalar polynomial equation and a linear involution determine a four-step pattern. After four updates, both the iterate and the inverse-Hessian estimate are scaled by $64/81$. Once realized by a positively homogeneous function, this pattern repeats at successively smaller scales, so the iterates converge to the origin.
Section~\ref{sec:realization} constructs this function by interpolation.
It matches the prescribed values and gradients while keeping its Clarke
subgradients uniformly separated from zero. The interpolation also allows
us to prescribe compatible Hessians for the subsequent stability argument.
Section~\ref{sec:stability} verifies that, with the
chosen Hessians and after dividing out the geometric decay, the four-step return map
contracts directions transverse to the ray of scaled cycle states, and
establishes an invariant neighborhood. Finally, Section~\ref{sec:completion}
makes the objective coercive while preserving local Lipschitz continuity, then proves convergence from an open set of initializations.

The proof relies on computer-assisted certification of the finite inequalities
in Appendix~\ref{app:certificates}, using rigorous interval arithmetic
\cite{MooreKearfottCloud2009,Rump2010}.
We also use symbolic computation in Python
to confirm the cycle identities proved analytically in Section~\ref{sec:cycle}.
The verification details, executed Jupyter notebook, and Lean
formalization of the main result are available in the accompanying GitHub repository.

\section{Construction and proof}\label{sec:proof}
Recall that a function $\phi:\R^n\to\R$ is positively homogeneous of
degree one if $\phi(tx)=t\phi(x)$ for every $x\in\R^n$ and $t\ge0$.
We use ``homogeneous'' in this sense throughout. We first record the consequences of homogeneity used in the construction
and in the analysis of BFGS.

\begin{proposition}[Homogeneity identities]\label{prop:homogeneity}
Let $\phi:\R^n\to\R$ be positively homogeneous and $C^2$ near $x\in\R^n$. Then, for every $\tau>0$, $\phi$ is $C^2$ near $\tau x$, and
\begin{equation}\label{eq:homogeneity-identities}
 x^T\nabla\phi(x)=\phi(x),\qquad
 \nabla\phi(\tau x)=\nabla\phi(x),\qquad
 \nabla^2\phi(\tau x)=\tau^{-1}\nabla^2\phi(x).
\end{equation}
\end{proposition}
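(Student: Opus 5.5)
The plan is to derive all three identities from the functional equation $\phi(\tau z)=\tau\phi(z)$, valid for $z\in\R^n$ and $\tau>0$, by differentiating it in $z$ and in $\tau$. The first task is to transport smoothness. Suppose $\phi$ is $C^2$ on an open neighborhood $V$ of $x$, and fix $\tau>0$. The set $\tau V$ is an open neighborhood of $\tau x$. For every $w\in\tau V$ homogeneity gives $\phi(w)=\tau\,\phi(w/\tau)$, and $w/\tau\in V$. The right-hand side is the composition of the smooth linear map $w\mapsto w/\tau$ with the $C^2$ function $\phi|_V$, multiplied by the constant $\tau$. Hence $\phi$ is $C^2$ on $\tau V$.

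Next I will differentiate the identity $\phi(w)=\tau\phi(w/\tau)$ on $\tau V$ with respect to $w$, using the chain rule. This gives $\nabla\phi(w)=\tau\cdot\tau^{-1}\nabla\phi(w/\tau)=\nabla\phi(w/\tau)$. Differentiating once more gives $\nabla^2\phi(w)=\tau^{-1}\nabla^2\phi(w/\tau)$. Evaluating both at $w=\tau x$ yields the second and third identities in \eqref{eq:homogeneity-identities}.

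For the Euler identity $x^T\nabla\phi(x)=\phi(x)$, I will consider the scalar function $h(t)=\phi(tx)$ for $t$ near $1$. For such $t$, $tx\in V$, so $h$ is differentiable there. The chain rule gives $h'(1)=x^T\nabla\phi(x)$. Homogeneity gives $h(t)=t\phi(x)$ for $t>0$, so $h'(1)=\phi(x)$, and comparing the two expressions gives the identity.

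There is no real obstacle in this argument. The only point requiring care is that differentiability is assumed only near $x$. That is why the smoothness transfer is done first: every derivative taken afterwards is on the open set $V$ or $\tau V$, and the case $\tau=0$ is never used.
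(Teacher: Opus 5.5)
Your proposal is correct and follows essentially the same route as the paper: you transfer smoothness through $\phi(w)=\tau\phi(w/\tau)$ on $\tau V$, derive the Euler identity by differentiating $t\mapsto\phi(tx)$ at $t=1$, and obtain the gradient and Hessian scalings from the chain rule. The only cosmetic difference is that you differentiate in $w$ near $\tau x$, whereas the paper differentiates $\phi(\tau z)=\tau\phi(z)$ in $z$ at $x$; both yield the same identities.
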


\begin{proof}
Let $U$ be an open neighborhood of $x$ on which $\phi$ is $C^2$.
For fixed $\tau>0$, homogeneity gives $\phi(z)=\tau\phi(z/\tau)$
on $\tau U$. The right-hand side is $C^2$ there, so $\phi$ is $C^2$
in a neighborhood of $\tau x$.
Differentiating $\phi(tx)=t\phi(x)$ with respect to $t$ at $t=1$
gives the first identity in \eqref{eq:homogeneity-identities}.
Differentiating $\phi(\tau z)=\tau\phi(z)$ once and twice
with respect to $z$ at $x$ gives the derivative scaling identities.
\end{proof}

We next record how BFGS updates transform under scaling and invertible
linear changes of coordinates. For homogeneous objectives, scaling the
point and inverse-Hessian estimate together also preserves the
line-search decisions.

\begin{lemma}[BFGS scaling and covariance]\label{lem:bfgs-scaling}
For $H\in\R^{n\times n}$, invertible $A\in\R^{n\times n}$,
$s,y\in\R^n$ with $s^Ty\ne0$, and $\tau>0$, the update in
\eqref{eq:update} satisfies
\begin{equation}\label{eq:bfgs-transformations}
 \mathcal B(\tau AHA^T,\tau As,A^{-T}y)
 =\tau A\mathcal B(H,s,y)A^T.
\end{equation}

Let $\phi$ and $x$ satisfy the hypotheses of
Proposition~\ref{prop:homogeneity}, and let
$H\succ0$, $\nabla\phi(x)\ne0$ and $\tau>0$.
Applied to $\phi$ with the same line-search parameters,
Algorithm~\ref{alg:bfgs} tests the same step sizes and makes the same
acceptance decisions from $(x,H)$ and $(\tau x,\tau H)$.
If the first run accepts a step and produces $(x^{1},H^{1})$,
and $\phi$ is $C^2$ near $x^{1}$, then the second accepts the same
step size and produces $(\tau x^{1},\tau H^{1})$.
\end{lemma}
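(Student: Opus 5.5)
The identity \eqref{eq:bfgs-transformations} is a direct algebraic check, which I will do first. With $s'=\tau As$ and $y'=A^{-T}y$ we get $s'^Ty'=\tau s^TA^TA^{-T}y=\tau s^Ty\neq0$, so the update is defined. Next I compute the two factors:
\[
I-\frac{s'y'^T}{s'^Ty'}=I-\frac{\tau As y^TA^{-1}}{\tau s^Ty}=A\Bigl(I-\frac{sy^T}{s^Ty}\Bigr)A^{-1}.
\]
Transposing gives $I-y's'^T/(s'^Ty')=A^{-T}(I-ys^T/(s^Ty))A^T$. The rank-one term is $s's'^T/(s'^Ty')=\tau^2Ass^TA^T/(\tau s^Ty)=\tau A\frac{ss^T}{s^Ty}A^T$. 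Substituting $H'=\tau AHA^T$, the middle product becomes
\[
A(\cdot)A^{-1}\,\tau AHA^T\,A^{-T}(\cdot)A^T=\tau A(\cdot)H(\cdot)A^T.
\]
Adding the rank-one term yields $\tau A\mathcal B(H,s,y)A^T$.

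For the line-search statement I would use Proposition~\ref{prop:homogeneity}. Since $\nabla\phi(\tau x)=\nabla\phi(x)=:g\neq0$, the second run has direction $\tilde p=-\tau Hg=\tau p$. Homogeneity gives
\[
\tilde\varphi(\alpha)=\phi(\tau x+\alpha\tau p)=\tau\varphi(\alpha)\quad\text{for all }\alpha\ge0.
\]
So $\tilde\varphi(0)=\tau\varphi(0)$ and $g^T\tilde p=\tau g^Tp$. The Armijo inequality for $\tilde\varphi$ at $\alpha$ is therefore $\tau$ times the one for $\varphi$, and it holds or fails simultaneously. For Wolfe, $\tilde\varphi$ is differentiable at $\alpha$ iff $\varphi$ is, because $\tilde\varphi=\tau\varphi$ as functions. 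In that case $\tilde\varphi'(\alpha)=\tau\varphi'(\alpha)$, so the strict inequality scales by $\tau>0$ and again holds or fails simultaneously. Both runs start with $(\ell,u,\alpha)=(0,+\infty,1)$, and the bracket updates depend only on the test outcomes. Induction over the inner loop then shows both runs test the same sequence of step sizes and make the same decisions. In particular, if the first run accepts $\alpha^0$, so does the second, and it does not accept earlier.

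Finally I would treat the update after an accepted step. We have $\tilde s=\alpha^0\tau p=\tau s$ and $\tilde x^1=\tau x+\tau s=\tau x^1$. Because $\phi$ is $C^2$ near $x^1$, Proposition~\ref{prop:homogeneity} gives that $\phi$ is $C^2$ near $\tau x^1$ and $\nabla\phi(\tau x^1)=\nabla\phi(x^1)$. Hence $\tilde y=y$. By the Wolfe remark after Algorithm~\ref{alg:bfgs}, $s^Ty>0$. Applying \eqref{eq:bfgs-transformations} with $A=I$ then gives $\tilde H^1=\mathcal B(\tau H,\tau s,y)=\tau H^1$.

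The only delicate point is that the Wolfe test treats nondifferentiability as failure. I handle it by observing that differentiability of $\tilde\varphi$ and $\varphi$ at $\alpha$ coincide, since the two functions are exact scalar multiples of each other. Everything else is bookkeeping.
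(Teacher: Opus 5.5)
Your proof is correct and follows essentially the same route as the paper. Both arguments use the conjugation identity for the factor $I-s'y'^T/(s'^Ty')$ and its transpose together with the rank-one term, then $\widetilde\varphi=\tau\varphi$ with $g^T\widetilde p=\tau g^Tp$ so that differentiability, the Armijo and Wolfe tests, and hence the bracket updates coincide, and finally $\widetilde s=\tau s$, $\widetilde y=y$ and \eqref{eq:bfgs-transformations} with $A=I$; your explicit induction over the inner loop only spells out what the paper states in one line.
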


\begin{proof}
The transformed scalar product is
$(\tau As)^T(A^{-T}y)=\tau s^Ty$, and
\[
 \begin{aligned}
 I-\frac{(\tau As)(A^{-T}y)^T}{\tau s^Ty}
 &=A\left(I-\frac{sy^T}{s^Ty}\right)A^{-1},\\
 \frac{(\tau As)(\tau As)^T}{\tau s^Ty}
 &=\tau A\frac{ss^T}{s^Ty}A^T.
 \end{aligned}
\]
Substitution into \eqref{eq:update}, together with the transpose of the
first identity, proves \eqref{eq:bfgs-transformations}.

Set $p=-H\nabla\phi(x)$. By the gradient scaling identity in
Proposition~\ref{prop:homogeneity}, the search direction at
$(\tau x,\tau H)$ is $\tau p$.
For the line functions $\varphi(\alpha)=\phi(x+\alpha p)$ and
$\widehat\varphi(\alpha)=\phi(\tau x+\alpha\tau p)$, homogeneity gives
$\widehat\varphi=\tau\varphi$. Thus a line derivative exists at a trial
in one search exactly when it exists in the other, and then
$\widehat\varphi'=\tau\varphi'$.
Both sides of the Armijo and Wolfe tests
\eqref{eq:armijo}--\eqref{eq:wolfe} are multiplied by $\tau>0$,
so the decisions and subsequent trial step sizes coincide.
If $\phi$ is $C^2$ near the accepted iterate $x^{1}=x+\alpha p$,
Proposition~\ref{prop:homogeneity} applies at both $x$ and $x^{1}$.
Thus the displacement is multiplied by $\tau$ and the gradient
difference is unchanged.
The Wolfe condition gives positive curvature, so the update is defined;
\eqref{eq:bfgs-transformations} with $A=I$ proves the assertion about
the resulting states.
\end{proof}

\subsection{An exact four-step cycle}\label{sec:cycle}
We construct the first two updates and obtain the next two by the scaling
and covariance identity in Lemma~\ref{lem:bfgs-scaling}.
Set $\rho=8/9$ and $r=\rho^2=64/81$.
We will choose a linear involution $J\in\R^{2\times2}$, meaning $J^2=I$,
so that the data after two updates are related to the initial data by
\begin{equation}\label{eq:cycle-state-transformation}
 (x,H,g)\longmapsto(\rho J^Tx,\rho J^THJ,Jg).
\end{equation}
Lemma~\ref{lem:bfgs-scaling}, with $A=J^T$ and $\tau=\rho$,
then transforms updates $0\to1\to2$ into updates $2\to3\to4$.
Applying this transformation twice gives $(rx,rH,g)$, so the four updates
return the point and inverse-Hessian estimate to their initial values
multiplied by $r$.

For now, let $b>1/2$ be a free parameter. Let $e_1=(1,0)^T$ and
$e_2=(0,1)^T$ be the standard basis vectors, and choose the first two
gradients as $g^0=e_1$ and $g^1=e_2$.
Choose the first column of $H^0$ as $(1,1/2)^T$. The update
\eqref{eq:update} with displacement $s^0  = p^0 = -H^0e_1$ then gives
$(H^1)_{22}=4(H^0)_{22}-1/2$.
We use $b=(H^1)_{22}$ as the parameter to simplify the second update,
so $(H^0)_{22}=b/4+1/8$. We therefore set
\begin{equation}\label{eq:initial-metrics}
 H^0=\begin{pmatrix}1&1/2\\1/2&b/4+1/8\end{pmatrix},\qquad
 H^1=\begin{pmatrix}b+3/2&b+1/2\\b+1/2&b\end{pmatrix}.
\end{equation}

We seek a second update with $g^2=Je_1$ and
$H^2=\rho J^TH^0J$, where
\begin{equation}\label{eq:involution-form}
 J=\begin{pmatrix}p&t\\q&-p\end{pmatrix}
\end{equation}
has entries to be determined and must satisfy $J^2=I$.
The next lemma verifies a proposed two-dimensional BFGS update by
checking its secant equation and determinant.

\begin{lemma}[A two-dimensional update test]\label{lem:secant-det}
Let $H\in\SPD$ and $s,y\in\R^2$ satisfy $s^Ty>0$.
A symmetric matrix $K\in\R^{2\times2}$ equals $\mathcal B(H,s,y)$
if and only if
\begin{equation}\label{eq:update-test}
 Ky=s,\qquad \det K=\det H\,\frac{s^TH^{-1}s}{s^Ty}.
\end{equation}
\end{lemma}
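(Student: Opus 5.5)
The plan is to prove necessity by computing the secant equation and the determinant of $\mathcal B(H,s,y)$ directly. Sufficiency will then follow from a uniqueness statement: on the set of symmetric $2\times2$ matrices, the two conditions in \eqref{eq:update-test} determine $K$.

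\emph{Necessity.} Write $B=\mathcal B(H,s,y)$ and $P=I-\frac{ys^T}{s^Ty}$, so that $B=P^THP+\frac{ss^T}{s^Ty}$. Since $Py=y-y=0$, we get $By=\frac{s\,(s^Ty)}{s^Ty}=s$, which is the secant equation.

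For the determinant, I would use the standard product formula for the inverse BFGS update. Alternatively, in two dimensions I would compute in the basis $\{y,w\}$ with $w\perp s$; this basis is valid because $s^Ty>0$ implies $y\notin s^\perp$.

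A cleaner route is to invert. The inverse $B^{-1}$ is the direct BFGS update of $H^{-1}$, namely
\[
H^{-1}-\frac{H^{-1}ss^TH^{-1}}{s^TH^{-1}s}+\frac{yy^T}{s^Ty}.
\]
Its determinant can be computed via the matrix determinant lemma applied twice. The first rank-one removal gives a singular matrix $M$ with kernel spanned by $s$. One has $\det(M+\frac{yy^T}{s^Ty})=\frac{(y^Tv)^2}{s^Ty}\cdot(\text{product of the other eigenvalue})$, where $v$ spans the kernel.

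In two dimensions it is simplest to evaluate the bilinear form of $B^{-1}$ in the basis $\{s,u\}$, where $u$ is chosen $H^{-1}$-orthogonal to $s$. A short calculation gives
\[
\det B^{-1}=\det H^{-1}\cdot\frac{s^Ty}{s^TH^{-1}s}.
\]
Taking reciprocals gives the stated formula.

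\emph{Sufficiency.} Suppose $K$ is symmetric and satisfies \eqref{eq:update-test}. Set $D=K-B$. Then $D$ is symmetric and $Dy=0$. Since $y\neq0$, which follows from $s^Ty>0$, we have $D=\lambda\,zz^T$ with $z\perp y$ a fixed nonzero vector.

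The determinant is affine in $\lambda$ along this direction:
\[
\det(B+\lambda zz^T)=\det B+\lambda\, z^T\operatorname{adj}(B)z.
\]
Because $B\succ0$, which holds since $s^Ty>0$ and $H\succ0$, the coefficient $z^T\operatorname{adj}(B)z=\det B\cdot z^TB^{-1}z$ is positive. Therefore equal determinants force $\lambda=0$, so $K=B$.

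The only step requiring care is the determinant computation in the necessity part. I would verify it in the adapted basis rather than in general coordinates, and I would double-check the orientation of the ratio: the factor is $s^TH^{-1}s/s^Ty$, not its reciprocal.
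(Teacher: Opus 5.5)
Your proposal is correct. The sufficiency half is essentially the paper's argument. Once $Ky=s$ is imposed, a symmetric $2\times2$ matrix has one free parameter, and the determinant is affine in it with nonzero slope.

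The paper sees this in an orthonormal basis with first vector $y/\norm{y}$, where the slope is the entry $K_{11}=s^Ty/\norm{y}^2>0$. Your identity $\det(B+\lambda zz^T)=\det B+\lambda z^T\operatorname{adj}(B)z$ expresses the same fact. Note that in two dimensions $\operatorname{adj}(B)=R^TBR$ for the quarter-turn rotation $R$, and $Rz\parallel y$. Hence $z^T\operatorname{adj}(B)z$ is a positive multiple of $y^TBy=s^Ty$, so this step needs only $s^Ty>0$, not $B\succ0$.

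The necessity half takes a genuinely different route. The paper computes $\mathcal B(H,s,y)$ entrywise in coordinates with $s=(\sigma,0)^T$ and reads off $\det=d\sigma/u$. You instead pass to $B^{-1}$, the direct BFGS update of $H^{-1}$, and compare Gram determinants in an $H^{-1}$-conjugate basis $\{s,u\}$. Your \emph{short calculation} does close. The Gram matrix of $B^{-1}$ has entries $s^Ty$, $y^Tu$ and $u^TH^{-1}u+(y^Tu)^2/(s^Ty)$, so its determinant is $(s^Ty)(u^TH^{-1}u)$. The corresponding determinant for $H^{-1}$ is $(s^TH^{-1}s)(u^TH^{-1}u)$, and the common change-of-basis factor cancels.

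Your route is essentially coordinate-free and extends to $\R^n$, using an $H^{-1}$-conjugate complement of $s$ and a Schur complement. However, it relies on the inverse/direct duality of BFGS, which you cite rather than prove. The paper's computation is self-contained and also yields an explicit formula for the update.

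When writing this up, commit to the Gram-determinant route and display the computation. The other sketched routes can be dropped; if you keep the determinant-lemma aside, it needs $v$ to be a unit vector.
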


\begin{proof}
The update \eqref{eq:update} is symmetric and satisfies
$\mathcal B(H,s,y)y=s$. Choose an orthonormal basis with
$s=(\sigma,0)^T$, where $\sigma=\norm s>0$, and write $y=(u,v)^T$ and
$H=\left(\begin{smallmatrix}a&h\\h&d\end{smallmatrix}\right)$.
Since $\sigma u=s^Ty>0$, we have $u>0$, and substitution into
\eqref{eq:update} gives
\[
 \mathcal B(H,s,y)=
 \begin{pmatrix}dv^2/u^2+\sigma/u&-dv/u\\-dv/u&d\end{pmatrix}.
\]
Its determinant is $d\sigma/u$. Since
$s^TH^{-1}s=d\sigma^2/\det H$, this is the determinant in
\eqref{eq:update-test}, proving necessity.

Conversely, in an orthonormal basis whose first vector is $y/\norm y$,
the equation $Ky=s$ fixes the first column and, by symmetry, the first
row of $K$. Its first diagonal entry is $s^Ty/\norm y^2>0$, so its
determinant fixes the remaining entry. Hence the two conditions in
\eqref{eq:update-test} determine $K$ uniquely, and the matrix is
$\mathcal B(H,s,y)$.
\end{proof}

We first derive the second-update conditions assuming $J^2=I$, and
then choose $b$ so that the resulting matrix satisfies this identity.
Under this assumption, \eqref{eq:involution-form} gives
$\det J=-1$. By \eqref{eq:initial-metrics},
$\det H^2/\det H^1=\rho^2/2=32/81$.
Set $s=-H^1g^1$ and $y=g^2-g^1$. Then
$s^T(H^1)^{-1}s=(g^1)^TH^1g^1=b$, so the determinant condition
\eqref{eq:update-test}, with $H=H^1$ and $K=H^2$, becomes
\begin{equation}\label{eq:second-determinant}
 \frac{32}{81}=\frac{\det H^2}{\det H^1}
 =\frac{s^T(H^1)^{-1}s}{s^Ty}
 =\frac{b}{(g^2-g^1)^T(-H^1g^1)}.
\end{equation}
Rearranging gives $(g^2-g^1)^T(-H^1g^1)=81b/32$.
For the secant equation $H^2(g^2-g^1)=-H^1g^1$, substituting
$H^2=\rho J^TH^0J$ and $g^2=Je_1$, multiplying by $J^T$ and
using $J^2=I$ gives the equivalent equation
\begin{equation}\label{eq:second-secant}
 \rho H^0(e_1-Je_2)=-J^TH^1e_2.
\end{equation}
Expanding the curvature identity obtained from \eqref{eq:second-determinant}
and the secant equation \eqref{eq:second-secant}, using \eqref{eq:initial-metrics},
\eqref{eq:involution-form} and $\rho=8/9$, gives
\begin{equation}\label{eq:involution-conditions}
 \begin{aligned}
 -(b+\tfrac12)p-b(q-1)&=\frac{81b}{32},\\
 \frac89(1-t+\tfrac12p)&=-(b+\tfrac12)p-bq,\\
 \frac89\bigl(\tfrac12-\tfrac12t+(\tfrac b4+\tfrac18)p\bigr)
 &=-(b+\tfrac12)t+bp.
 \end{aligned}
\end{equation}
 For $b>1/2$, this linear system has
a unique solution:
\begin{equation}\label{eq:involution-data}
 \begin{gathered}
 p=-\frac{9(882b^2-463b-256)}{640(2b-1)},\qquad
 t=-\frac{3087b^2-2233b-256}{320(2b-1)},\\
 q=\frac{15876b^3-4316b^2-6815b-2304}{1280b(2b-1)}.
 \end{gathered}
\end{equation}
Since $J^2=(p^2+qt)I$, it remains to impose $p^2+qt=1$.
Substitution reduces this requirement to
\begin{equation}\label{eq:involution}
 p^2+qt-1=\frac{9P(b)}{409600b(2b-1)} = 0,
\end{equation}
where
\begin{equation}\label{eq:quartic}
 P(b)=777924b^4-576632b^3-587665b^2+352256b+65536.
\end{equation}
Thus, $J^2=I$ if and only if $P(b)=0$ for $b>1/2$.
The following lemma selects an admissible value of the parameter $b$.

\begin{lemma}[Admissible root]\label{lem:root-certificate}
The polynomial $P$ has a unique root $b$ satisfying
\[
 0.91781039164<b<0.91781039165.
\]
\end{lemma}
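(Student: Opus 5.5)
The plan is to prove existence by a sign change and uniqueness by monotonicity, both certified with exact rational arithmetic. Set $b_-=0.91781039164$ and $b_+=0.91781039165$. Since $P$ has integer coefficients and $b_\pm$ are decimals, $P(b_-)$ and $P(b_+)$ are exact rationals. Multiplying by $10^{44}$ turns them into integers, which can be evaluated exactly by Horner's rule.

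\textbf{Existence.} I will check that $P(b_-)$ and $P(b_+)$ have opposite signs. A rough numerical evaluation near $b\approx0.9178$ should show this, and the exact integer computation certifies it. The intermediate value theorem then gives a root in $(b_-,b_+)$.

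\textbf{Uniqueness.} I will show that $P'$ has constant sign on $[b_-,b_+]$, so $P$ is strictly monotone there. Here
\[
P'(b)=3111696b^3-1729896b^2-1175330b+352256 .
\]
To bound $P'$ away from zero on the interval, I expand it around $b_-$ and bound the remainder crudely. The interval width is $10^{-11}$ and $|P''|$ is at most about $10^7$ on $[0,1]$, so $|P'(b)-P'(b_-)|\le 10^{-4}$ for every $b$ in the interval. It then suffices to verify that $|P'(b_-)|>10^{-4}$ with the appropriate sign, which again is an exact rational computation. Equivalently, one can evaluate $P'$ on the interval by interval arithmetic, as in Appendix~\ref{app:certificates}.

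\textbf{Main obstacle.} The only delicate point is that the values $P(b_\pm)$ are tiny relative to the size of the coefficients. Their magnitude is roughly $|P'(b)|\cdot10^{-11}$, so floating-point evaluation cannot be trusted. I will therefore do the sign checks in exact integer arithmetic on $10^{44}P(b_\pm)$, as in the accompanying verification code.

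Uniqueness is only needed inside the stated interval, and the root is simple there, so no global root count of the quartic is required. As a consistency check, $b_->1/2$, so this root is admissible for \eqref{eq:involution-data}.
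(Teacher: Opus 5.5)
Your proposal is correct and follows the paper's proof: an exact rational sign check at the endpoints (which turns out to be $P(b_-)<0<P(b_+)$) with the intermediate value theorem, and uniqueness from $P'>0$ on the interval. The only difference is cosmetic: you bound $P'$ by a Taylor remainder around $b_-$, whereas the paper uses the termwise bound $P'(z)\ge 3111696a^3-1729896c^2-1175330c+352256>200000$; either suffices, since $P'\approx 2.2\times10^5$ there.
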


\begin{proof}
Set $a=0.91781039164$ and $c=0.91781039165$.
Exact rational evaluation gives $P(a)<0<P(c)$. For $z\in[a,c]$,
\[
 P'(z)\ge3111696a^3-1729896c^2-1175330c+352256>200000.
\]
The intermediate value theorem gives a root in $(a,c)$,
and the positive derivative makes it unique in $(a,c)$.
\end{proof}

For the rest of the construction, fix the root $b$ supplied by
Lemma~\ref{lem:root-certificate}. Equation~\eqref{eq:involution} now gives
$J^2=I$ and $\det J=-1$.
Complete the gradient sequence by setting
\begin{equation}\label{eq:cycle-gradients}
 (g^0,g^1,g^2,g^3,g^4)=(e_1,e_2,Je_1,Je_2,e_1).
\end{equation}
With $H^0,H^1$ from \eqref{eq:initial-metrics}, define the remaining
matrices by
\begin{equation}\label{eq:cycle-metrics}
 H^2=\rho J^TH^0J,\qquad
 H^3=\rho J^TH^1J,\qquad H^4=rH^0.
\end{equation}
Set $v_0=H^0g^0+H^1g^1=(b+3/2,b+1/2)^T$, using
\eqref{eq:initial-metrics} and \eqref{eq:cycle-gradients}.
The prescribed displacements $-H^0g^0$ and $-H^1g^1$ sum to $-v_0$, so requiring
$x^2=\rho J^Tx^0$ gives $(I-\rho J^T)x^0=v_0$.
Since $J^2=I$ and $r=\rho^2<1$, we have
$(I-\rho J^T)^{-1}=(I+\rho J^T)/(1-r)$.
We therefore define the positions by
\begin{equation}\label{eq:cycle-positions}
 \begin{gathered}
 x^0=\frac{(I+\rho J^T)v_0}{1-r},\qquad
 x^1=x^0-(1,1/2)^T,\\
 x^2=\rho J^Tx^0,\qquad
 x^3=\rho J^Tx^1,\qquad x^4=rx^0.
 \end{gathered}
\end{equation}
To satisfy the homogeneity identity in Proposition~\ref{prop:homogeneity},
prescribe $f^i=(g^i)^Tx^i$ for $0\le i\le4$.
Equations~\eqref{eq:cycle-gradients} and \eqref{eq:cycle-positions} give
$g^4=g^0$ and $f^4=rf^0$, as required by
\eqref{eq:homogeneity-identities} at $x^4=rx^0$.
Define
\begin{equation}\label{eq:cycle-recursion}
 s^i=-H^ig^i,\qquad y^i=g^{i+1}-g^i\quad(0\le i<4).
\end{equation}

The next lemma verifies the cycle, positive curvature, and position bounds.

\begin{lemma}[Exact cycle]\label{lem:cycle}
For the data just constructed,
$H^i\succ0$ and $\norm{x^i}^2<8$ for $0\le i\le4$, and
\begin{equation}\label{eq:cycle}
 \begin{gathered}
 x^{i+1}=x^i+s^i,\qquad H^{i+1}=\mathcal B(H^i,s^i,y^i)
 \quad(0\le i<4),\\
 H^4=rH^0,\qquad x^4=rx^0.
 \end{gathered}
\end{equation}
The curvature products are positive and satisfy
\begin{equation}\label{eq:curvatures}
 \bigl((s^i)^Ty^i\bigr)_{i=0}^3
 =\left(\frac12,\frac{81b}{32},\frac49,\frac{9b}{4}\right).
\end{equation}
\end{lemma}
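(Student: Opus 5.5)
We verify the statement in two stages: first the updates $0\to1$ and $1\to2$ directly, then the updates $2\to3\to4$ by transporting them with Lemma~\ref{lem:bfgs-scaling}.

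\emph{Position identities.} The position identities $x^{i+1}=x^i+s^i$ are mostly built into \eqref{eq:cycle-positions}.
\begin{itemize}
\item For $i=0$: $s^0=-H^0e_1=-(1,1/2)^T$, which matches $x^1=x^0-(1,1/2)^T$.
\item For $i=1$: $x^1+s^1=x^0-v_0=\rho J^Tx^0=x^2$, since $(I-\rho J^T)x^0=v_0$.
\item For $i=2,3$: we need $s^2=\rho J^Ts^0$ and $s^3=\rho J^Ts^1$. Both follow from $H^{i+2}=\rho J^TH^iJ$ and $g^{i+2}=Jg^i$, which give $-H^{i+2}g^{i+2}=-\rho J^TH^iJ^2g^i=\rho J^Ts^i$. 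Then $x^3-x^2=\rho J^T(x^1-x^0)$ and $x^4-x^3=\rho J^Ts^1=\rho J^T(x^2-x^1)$.
\item Consistency of the last step: $\rho J^Tx^2=r(J^T)^2x^0=rx^0=x^4$, using $J^2=I$.
\end{itemize}

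\emph{Matrix updates.} For $i=0$ and $i=1$ we apply Lemma~\ref{lem:secant-det}.
\begin{itemize}
\item \emph{Update $0\to1$.} We have $y^0=e_2-e_1$ and $s^0=-(1,1/2)^T$, so $(s^0)^Ty^0=1-1/2=1/2>0$. The secant equation $H^1(e_2-e_1)=-(1,1/2)^T$ is a direct check from \eqref{eq:initial-metrics}. For the determinant, compute $\det H^0=b/4-1/8$ and $\det H^1=b/2-1/4$. Then $s^{0T}(H^0)^{-1}s^0=e_1^TH^0e_1=1$, so the required ratio $1/(1/2)=2$ matches.
\item \emph{Update $1\to2$.} By construction, \eqref{eq:involution-conditions} encodes exactly the secant equation \eqref{eq:second-secant} and the determinant condition \eqref{eq:second-determinant}. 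These are equivalent to the hypotheses of Lemma~\ref{lem:secant-det}, once we know $J^2=I$ (from $P(b)=0$, Lemma~\ref{lem:root-certificate}) and the curvature $(s^1)^Ty^1=81b/32>0$. One point needs care: the equivalence of \eqref{eq:second-secant} with the original secant equation uses $J^2=I$, and the determinant ratio uses $\det J=-1$, so $\det(\rho J^TH^0J)=\rho^2\det H^0$.
\item \emph{Updates $2\to3\to4$.} These follow from \eqref{eq:bfgs-transformations} with $A=J^T$ and $\tau=\rho$. Here $A^{-T}y^i=J^{-1}y^i=Jy^i=y^{i+2}$ and $\tau As^i=s^{i+2}$. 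The right side for $i=1$ is $\rho J^T H^2 J=\rho^2 (J^T)^2H^0J^2=rH^0=H^4$.
\item \emph{Curvatures for $i=2,3$.} These are $\rho$ times the earlier ones: $(8/9)(1/2)=4/9$ and $(8/9)(81b/32)=9b/4$.
\end{itemize}

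\emph{Positive definiteness.} $H^0\succ0$ holds since $H_{11}=1>0$ and $\det H^0=b/4-1/8>0$ for $b>1/2$. Then $H^1,\dots,H^4$ are positive definite as congruences or positive multiples of $H^0,H^1$ (also as BFGS updates with positive curvature). Note $\det H^1=b/2-1/4>0$.

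\emph{Norm bounds (main obstacle).} The bound $\norm{x^i}^2<8$ requires explicit numerics, since $x^0$ depends on $b$ through $J$ via the rational expressions \eqref{eq:involution-data}. The plan is as follows.
\begin{itemize}
\item Evaluate $p,q,t$ on the interval $[a,c]$ from Lemma~\ref{lem:root-certificate} by interval arithmetic.
\item Form $x^0$ and $x^1$, and bound their squared norms.
\item Since $J$ is generally not orthogonal, bound $x^2=\rho J^Tx^0$ and $x^3=\rho J^Tx^1$ directly, by enclosing their coordinates rather than via an operator-norm bound.
\item The case $x^4=rx^0$ is immediate from the bound on $x^0$.
\end{itemize}
I expect these to be tight-enough but finite interval certificates, verified alongside Appendix~\ref{app:certificates}. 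The only delicate algebraic point is checking that the solution \eqref{eq:involution-data} indeed solves \eqref{eq:involution-conditions}, which is a routine symbolic verification.
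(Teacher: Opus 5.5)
Your proposal is correct and matches the paper's proof essentially step for step. You use Lemma~\ref{lem:secant-det} for the first two updates, with \eqref{eq:involution-conditions} supplying the curvature $81b/32$ and the secant equation. You then use Lemma~\ref{lem:bfgs-scaling} with $A=J^T$, $\tau=\rho$ for the last two, argue positive definiteness by determinants plus congruence, and read the position identities off \eqref{eq:cycle-positions}. For the norm bound, the paper likewise just substitutes the root enclosure to get $|x^i_j|<2$ coordinatewise. That bound has ample slack (e.g.\ $x^0\approx(1.319,1.729)^T$), so your planned interval check is exactly what is needed.
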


\begin{proof}
\proofstep{1}{Positive definiteness and position bounds}
Lemma~\ref{lem:root-certificate} gives $P(b)=0$ and, in particular,
$b\in(0.9178,0.9179)$.
Thus \eqref{eq:involution} gives $J^2=I$.
For $H^0$ and $H^1$ in \eqref{eq:initial-metrics}, the determinants are
$(2b-1)/8$ and $(2b-1)/4$, respectively. Since $b>1/2$, both
determinants and both leading diagonal entries are positive.
The remaining matrices in \eqref{eq:cycle-metrics} are positive multiples of
congruences of these matrices, so all five are positive definite.

Substituting the bounds on $b$ into \eqref{eq:involution-data} and
\eqref{eq:cycle-positions} gives $|x_j^i|<2$ for $j=1,2$ and
$0\le i<4$. The definition $x^4=rx^0$, with $0<r<1$, gives the same
coordinate bounds for $x^4$. Hence $\norm{x^i}^2<8$ for
$0\le i\le4$.

\proofstep{2}{The first update}
By \eqref{eq:cycle-recursion},
$(s^i)^T(H^i)^{-1}s^i=(g^i)^TH^ig^i$ for $0\le i<4$.
For the first update, \eqref{eq:cycle-gradients} gives $g^0=e_1$ and
$g^1=e_2$, so
$s^0=-H^0e_1=(-1,-1/2)^T$ and $y^0=e_2-e_1=(-1,1)^T$.
Hence
\[
 H^1y^0
 =\begin{pmatrix}b+3/2&b+1/2\\b+1/2&b\end{pmatrix}
  \begin{pmatrix}-1\\1\end{pmatrix}
 =\begin{pmatrix}-(b+3/2)+(b+1/2)\\-(b+1/2)+b\end{pmatrix}
 =\begin{pmatrix}-1\\-1/2\end{pmatrix}=s^0.
\]
Moreover, $(s^0)^Ty^0=(-1)(-1)+(-1/2)(1)=1/2$ and
$(g^0)^TH^0g^0=e_1^TH^0e_1=(H^0)_{11}=1$.
Together with $\det H^1/\det H^0=2$, these identities verify
\eqref{eq:update-test}, so Lemma~\ref{lem:secant-det} proves the first update.

\proofstep{3}{The second update}
The parameters in \eqref{eq:involution-data} satisfy
\eqref{eq:involution-conditions} by construction.
By \eqref{eq:cycle-gradients} and \eqref{eq:cycle-recursion}, the vectors
$s,y$ used in \eqref{eq:second-determinant} are $s^1,y^1$.
The first relation in \eqref{eq:involution-conditions} therefore gives
$(s^1)^Ty^1=81b/32>0$.
The last two relations give \eqref{eq:second-secant}.
Using $J^2=I$ and \eqref{eq:cycle-metrics}, and multiplying
\eqref{eq:second-secant} by $J^T$, yields
\[
 H^2y^1=\rho J^TH^0(e_1-Je_2)=-H^1e_2=s^1.
\]
The determinant condition in \eqref{eq:second-determinant} is also
satisfied. Hence Lemma~\ref{lem:secant-det} gives
$H^2=\mathcal B(H^1,s^1,y^1)$.

\proofstep{4}{The third and fourth updates}
We express their data in terms of those of the first two updates
using the transformation in \eqref{eq:cycle-state-transformation}.
By \eqref{eq:cycle-gradients} and $J^2=I$, the gradients satisfy
$g^{i+2}=Jg^i$ for $i=0,1,2$.
The definitions in \eqref{eq:cycle-metrics} give
$H^{i+2}=\rho J^TH^iJ$ for $i=0,1$; the same identity holds for $i=2$ because
\[
 \rho J^TH^2J=\rho^2(J^T)^2H^0J^2=rH^0=H^4.
\]
Thus, for $i=0,1$, the definitions in \eqref{eq:cycle-recursion} give
\begin{equation}\label{eq:cycle-step-transformations}
 \begin{aligned}
 s^{i+2}&=-H^{i+2}g^{i+2}
 =-\rho J^TH^iJ^2g^i=\rho J^Ts^i,\\
 y^{i+2}&=g^{i+3}-g^{i+2}
 =J(g^{i+1}-g^i)=Jy^i.
 \end{aligned}
\end{equation}
In particular,
\[
 (s^{i+2})^Ty^{i+2}
 =\rho(s^i)^TJ^2y^i=\rho(s^i)^Ty^i>0\qquad(i=0,1).
\]
Apply Lemma~\ref{lem:bfgs-scaling} with $A=J^T$ and $\tau=\rho$,
so that $A^{-T}=J$. Using the first two updates proved in Steps~2--3,
\eqref{eq:bfgs-transformations} gives
\[
 \mathcal B(H^{i+2},s^{i+2},y^{i+2})=\rho J^T\mathcal B(H^i,s^i,y^i)J=\rho J^TH^{i+1}J=H^{i+3}\qquad(i=0,1).
\]
Taking $i=0$ and $i=1$ proves the third and fourth updates, respectively.
The curvature identity also gives $(s^2)^Ty^2=\rho/2=4/9$ and
$(s^3)^Ty^3=\rho(81b/32)=9b/4$.

\proofstep{5}{Position identities}
Since $s^0+s^1=-v_0$ and
$(I-\rho J^T)(I+\rho J^T)=(1-r)I$, the definition of $x^0$ in
\eqref{eq:cycle-positions} gives
$x^0+s^0+s^1=\rho J^Tx^0=x^2$.
The other position identities follow from \eqref{eq:cycle-positions}
and the step transformations in \eqref{eq:cycle-step-transformations}.
\end{proof}

\subsection{Realizing the objective and the line search}\label{sec:realization}
We now construct a homogeneous function $\widetilde f:\mathbb R^2\to\mathbb R$
that realizes the cycle and the line-search decisions. We use the cycle
positions $x^i$, matrices $H^i$, gradients $g^i$, values $f^i$, steps $s^i$,
gradient differences $y^i$, and contraction factor $r$ from
Section~\ref{sec:cycle}. To exclude Clarke stationarity at the origin,
we will keep the directional derivative along one fixed vector uniformly
positive. Consider
\begin{equation}\label{eq:profile-coordinates}
 M=\begin{pmatrix}1&0.05\\0&1\end{pmatrix},\qquad
 v=Me_2=(0.05,1)^T.
\end{equation}
We will show later in Proposition~\ref{prop:interpolation-certificate}
that $(g^i)^Tv>0$ for $0\le i<4$.
We construct $F:\R^2\to\R$ so that $\widetilde f(x)=F(M^{-1}x)$ has
gradients near the origin whose inner products with $v$ are uniformly positive.
Since $Me_2=v$, the chain rule gives
$v^T\nabla\widetilde f(Mz)=\partial_2F(z)$ wherever $F$ is differentiable.
Thus a uniform positive lower bound on $\partial_2F$ gives the desired
directional bound along $v$.

The construction of $F$ reduces to interpolating one univariate function.
We seek a $C^2$ semialgebraic function $h:\R\to\R$ whose first
derivative has a positive lower bound and which satisfies
$h(t)=t$ for sufficiently large $|t|$.
Use this function to define $F$ by
\begin{equation}\label{eq:homogeneous}
 F(z)=\begin{cases}
 |z_1|h(z_2/|z_1|),&z_1\ne0,\\
 z_2,&z_1=0,
 \end{cases}
\end{equation}
Positive scaling leaves the ratio unchanged and scales the prefactor,
so $F$ and $\widetilde f$ are positively homogeneous.
For $z_1\ne0$, partial differentiation gives
$\partial_2F(z)=h'(z_2/|z_1|)$.
The identity tails of $h$ make $F(z)=z_2$ near each nonzero point of the axis
$z_1=0$.

To construct $h$, we use the following
consequence of shape-preserving Hermite
interpolation~\cite[Sections~4.1--4.2]{Manni2001}. It provides an interpolant
that matches the prescribed values and first two derivatives while
keeping its first derivative positive.

\begin{lemma}[Monotone interpolation]\label{lem:interpolation}
Let $t_1<\cdots<t_N$ be interpolation points, with $N\ge1$.
For $1\le j\le N$, let $\eta_j,p_j,P_j\in\R$ be prescribed values of the
function and its first two derivatives at $t_j$. Suppose $0<m<1$ and
\[
 p_j>m\quad(1\le j\le N),\qquad
 \frac{\eta_{j+1}-\eta_j}{t_{j+1}-t_j}>m\quad(1\le j<N).
\]
Then there is a $C^2$ semialgebraic function $h:\R\to\R$ with
$h'(t)\ge m$ for every $t\in\R$ and
\[
 h(t_j)=\eta_j,\qquad h'(t_j)=p_j,\qquad h''(t_j)=P_j
 \quad(1\le j\le N).
\]
Moreover, the affine tails can be chosen as
\[
 h(t)=t+\eta_1-t_1\quad(t\le t_1-1),\qquad
 h(t)=t+\eta_N-t_N\quad(t\ge t_N+1).
\]
\end{lemma}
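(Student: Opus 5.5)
The construction is local: I build $h$ separately on each gap between consecutive nodes and on the two tails, and glue the pieces with $C^2$ matching at the nodes. To make the tails fit, I first add two auxiliary nodes, $t_0=t_1-1$ with data $(\eta_1-1,1,0)$ and $t_{N+1}=t_N+1$ with data $(\eta_N+1,1,0)$. These data agree with the affine tails, which have slope $1>m$ and zero second derivative. The chords to the new nodes both have slope exactly $1>m$. So the enlarged problem on $t_0<\dots<t_{N+1}$ still satisfies the strict hypotheses. Once the interior gaps are filled, I extend affinely with slope one beyond $t_0$ and $t_{N+1}$. Semialgebraicity holds because $h$ is piecewise polynomial, or piecewise given by explicit semialgebraic pieces, with finitely many breakpoints.

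It then suffices to prove the one-gap claim. Take $[a,c]$ with $\delta=c-a>0$ and data $(\eta_a,p_a,P_a)$, $(\eta_c,p_c,P_c)$, where $p_a,p_c>m$ and $\Delta=(\eta_c-\eta_a)/\delta>m$. I want a $C^2$ piecewise polynomial $h$ on $[a,c]$ with these endpoint data and $h'\ge m$. I would build the derivative $w=h'$ directly. It must satisfy $w(a)=p_a$, $w'(a)=P_a$, $w(c)=p_c$, $w'(c)=P_c$, $w\ge m$, and $\int_a^c w=\eta_c-\eta_a=\Delta\delta$. Setting $h(t)=\eta_a+\int_a^t w$ then gives a $C^2$ function with the right values.

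To construct $w$, pick a small $\epsilon>0$. On $[a,a+\epsilon]$, let $w$ be a cubic Hermite segment running from $(p_a,P_a)$ to value $\mu$ with zero slope. Do the same on $[c-\epsilon,c]$, ending at $(p_c,P_c)$. Fill the middle with a $C^1$ nonnegative bump added to the constant $\mu$. I choose $\mu$ with $m<\mu<\min(p_a,p_c,\Delta)$. The endpoint cubics differ from the constant $\mu$ by at most $|p-\mu|+O(\epsilon|P|)$, so they stay above $m$ once $\epsilon$ is small. The only issue there is the overshoot of the endpoint cubic. A Hermite cubic with values $p,\mu$ and slopes $P,0$ on an interval of length $\epsilon$ deviates from its linear part by $O(\epsilon|P|)$, and that is small.

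The integral of $w$ over the two end segments is $O(\epsilon)$. The constant part over the middle contributes $\mu(\delta-2\epsilon)$. The remaining mass $\Delta\delta-\mu(\delta-2\epsilon)-O(\epsilon)$ is positive for small $\epsilon$, because $\Delta>\mu$. I supply it with the bump $\lambda B$, where $B\ge0$ is the $C^1$ polynomial bump $(t-a-\epsilon)^2(c-\epsilon-t)^2$ supported in the middle interval and $\lambda>0$ is chosen so the total integral is exact. This is the one genuinely delicate step: the mass has to be nonnegative. It is exactly where the strict chord condition $\Delta>m$ enters, through the choice $\mu<\Delta$. The resulting $w$ is $C^1$, since at the junctions both the value and the slope match the constant $\mu$, and the bump vanishes to second order there. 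Hence $h\in C^2$ and $h'=w\ge m$ everywhere.

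Gluing across nodes is automatic, since the values, first derivatives, and second derivatives agree at each node by construction. This gives the lemma, with the stated affine tails. One could instead cite the shape-preserving Hermite schemes of \cite{Manni2001}, but the explicit local construction above already suffices.
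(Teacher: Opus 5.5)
Your proof is correct, but after the common first step it takes a different route from the paper. Like the paper, you adjoin $t_0=t_1-1$ and $t_{N+1}=t_N+1$ with slope-one, zero-curvature data, so the affine tails glue in $C^2$. The paper then subtracts $mt$, reducing the problem to $C^2$ monotone Hermite interpolation, and cites the parametric shape-preserving scheme of \cite{Manni2001}. It then needs Tarski--Seidenberg for semialgebraicity, because that scheme's graph is a union of polynomially parametrized arcs. You instead work gap by gap and build $w=h'$ directly as a $C^1$ piecewise polynomial with the prescribed end jets, $w>m$, and $\int_a^c w=\eta_c-\eta_a$. The strict chord hypothesis enters exactly through the choice $m<\mu<\Delta$, which makes the bump mass positive for small $\epsilon$. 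What your route buys is that it is self-contained and $h$ is piecewise polynomial, so semialgebraicity is immediate and the bound $h'\ge m$ is transparent. The paper's version is shorter, but it relies on the hypotheses of an external scheme, which it checks only briefly.

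One justification should be tightened. A bound $|w-\mu|\le|p-\mu|+O(\epsilon|P|)$ does not by itself keep $w$ above $m$. Nor does the zero-slope Hermite cubic stay within $O(\epsilon|P|)$ of the linear interpolant; the difference is of order $|p-\mu|$. The right argument is this. With $s=(t-a)/\epsilon$, the left end cubic is $w=p(1-s)^2(1+2s)+\mu s^2(3-2s)+\epsilon P\,s(1-s)^2$. The first two weights are nonnegative on $[0,1]$ and sum to one, and $0\le s(1-s)^2\le 4/27$. Hence $w\ge\min(p,\mu)-\tfrac{4}{27}\epsilon|P|>m$ once $\epsilon$ is small, and the same holds at the right end. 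This also shows that your requirement $\mu<\min(p_a,p_c)$ is unnecessary; $m<\mu<\Delta$ suffices.
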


\begin{proof}
Adjoin the points
$t_0=t_1-1$ and $t_{N+1}=t_N+1$, with
\[
 \eta_0=\eta_1-1,\qquad \eta_{N+1}=\eta_N+1,\qquad
 p_0=p_{N+1}=1,\qquad P_0=P_{N+1}=0.
\]
The two new secant slopes are one, so the extended data satisfy the
same strict inequalities. Consider the shifted Hermite data
\[
 (t_j,\eta_j-mt_j,p_j-m,P_j)\qquad(0\le j\le N+1).
\]
Their consecutive value increments and prescribed first derivatives
are positive. They therefore satisfy the monotonicity
conditions in~\cite[Definition~2.1]{Manni2001}: the strict positivity of
$p_j-m$ makes the endpoint conditions hold for sufficiently small
positive parameters, regardless of $P_j$.
The parametric construction~\cite[Section~4.1 and Section~4.2, equations~(4.7)--(4.9)]{Manni2001}
then gives a $C^2$ interpolant $g$ on $[t_0,t_{N+1}]$ with $g'\ge0$.
Its graph consists of finitely many polynomially parametrized arcs,
so it is semialgebraic by the Tarski--Seidenberg
theorem~\cite[Theorem~2.2.1]{BochnakCosteRoy1998}.

Set $h(t)=g(t)+mt$ on $[t_0,t_{N+1}]$ and extend it by the two
affine formulas in the statement. The function $g$ matches the shifted
data, so $h$ matches the original data and satisfies $h'\ge m$ on this
interval. The affine extensions match its endpoint values, first
derivatives one and second derivatives zero. Hence $h$ is $C^2$ and
semialgebraic on $\R$, with $h'\ge m$ everywhere.
\end{proof}

To prescribe the interpolation data, express the cycle positions in
the new coordinates as
\begin{equation}\label{eq:transformed-cycle-positions}
 z^i=M^{-1}x^i\quad(0\le i<4).
\end{equation}
The next proposition shows that the slope parameters $z_2^i/|z_1^i|$
are well defined and gives the order in which their data are interpolated.

\begin{proposition}[Slope parameters]\label{prop:geometry-certificate}
The points in \eqref{eq:transformed-cycle-positions} satisfy
$z_1^i\ne0$ for $0\le i<4$.
Writing $t^i=z_2^i/|z_1^i|$, we have
$-1<t^2<t^0<t^3<t^1<6$.
\end{proposition}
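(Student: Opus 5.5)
The statement is a finite computation with explicit algebraic data, so I would verify it by computing each $z^i$ explicitly as a function of $b$ and certifying the resulting inequalities on the isolating interval $b\in(0.91781039164,0.91781039165)$ from Lemma~\ref{lem:root-certificate}. The plan has three stages: compute the positions, check the first coordinates, then compare the slopes.

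\textbf{Positions.} First I would write $x^0,x^1,x^2,x^3$ as rational functions of $b$. This uses \eqref{eq:involution-data} for $p,q,t$, the matrix $J$ in \eqref{eq:involution-form}, $v_0=(b+3/2,b+1/2)^T$, $\rho=8/9$ and $r=64/81$. Concretely,
\[
 x^0=\frac{81}{17}\,(I+\rho J^T)v_0,\qquad x^1=x^0-(1,1/2)^T,
\]
and $x^2=\rho J^Tx^0$, $x^3=\rho J^Tx^1$. Next I would apply
\[
 M^{-1}=\begin{pmatrix}1&-0.05\\0&1\end{pmatrix},
\]
so that $z^i_1=x^i_1-0.05\,x^i_2$ and $z^i_2=x^i_2$.

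\textbf{Nonvanishing first coordinates.} Each $z^i_1$ is a rational function of $b$ whose denominator is a power of $(2b-1)$ times a power of $b$. These denominators are positive on the interval. To show $z_1^i\neq0$, I would enclose its numerator with interval arithmetic on $[a,c]$ and obtain an enclosure with a definite sign. Since the interval has width $10^{-11}$, the enclosure should be extremely tight. As a cross-check, I would also evaluate at the midpoint with an explicit Lipschitz bound on the numerator, in the style of the derivative bound in the proof of Lemma~\ref{lem:root-certificate}. This step also fixes the sign of each $z_1^i$, hence $|z_1^i|=\pm z_1^i$. The slopes $t^i=z^i_2/|z^i_1|$ then become explicit rational functions of $b$.

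\textbf{Ordering.} For the chain $-1<t^2<t^0<t^3<t^1<6$, I would reduce each of the five inequalities to the positivity of a polynomial in $b$. To do this, I would clear the denominators $|z_1^i|$, which are now known to be positive, and certify each polynomial on $[a,c]$ by interval arithmetic. Alternatively, I could use exact rational evaluation at $a$ and $c$ together with a derivative bound, as in Lemma~\ref{lem:root-certificate}. A further alternative is to reduce each polynomial modulo $P$, which leaves a remainder of degree at most three and is then easy to bound. Computing numerical values first, with $b\approx0.9178$, will show how large the margins are and guide how crude the bounds can be.

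\textbf{Expected obstacle.} The main risk is that some gap, most likely $t^0<t^3$ or $t^2<t^0$, is small. Then naive interval evaluation of the large rational expressions, with coefficients like $15876b^3$, would suffer dependency blow-up. I would handle this first by evaluating at exact rational endpoints with a Taylor or Lipschitz remainder. If that still fails, I would use exact reduction modulo $P(b)$ combined with Sturm sequences on $[a,c]$. The width $10^{-11}$ should make either approach succeed unless a gap is genuinely below about $10^{-8}$.
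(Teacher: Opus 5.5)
Your plan is correct and is essentially the paper's proof. The paper also substitutes the root enclosure from Lemma~\ref{lem:root-certificate} into the explicit formulas for $x^i$, applies $M^{-1}$, and reads off interval enclosures. Those enclosures bound $z_1^i$ away from zero and order the slopes.

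The margins are of order one: $z_1^0\approx1.23$, $z_1^1\approx0.26$, $z_1^2\approx-1.11$, $z_1^3\approx-0.25$, and $t^2\approx0.279$, $t^0\approx1.403$, $t^3\approx3.330$, $t^1\approx4.772$. So direct interval evaluation suffices, and none of your fallbacks for small gaps (Taylor remainders, reduction modulo $P$, Sturm sequences) is needed.
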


\begin{proof}
Substituting the refined bounds from Lemma~\ref{lem:root-certificate}
into \eqref{eq:involution-data}, \eqref{eq:cycle-positions},
\eqref{eq:profile-coordinates} and \eqref{eq:transformed-cycle-positions}
bounds $z_1^i$ away from zero.
Dividing $z_2^i$ by $|z_1^i|$ then gives the following enclosures:
\begin{center}
\begin{tabular}{ccc}
\toprule
$i$ & $z_1^i$ & $t^i$\\
\midrule
0 & $(1.1,1.3)$ & $(1.3,1.5)$\\
1 & $(0.2,0.3)$ & $(4.7,4.9)$\\
2 & $(-1.2,-1)$ & $(0.2,0.4)$\\
3 & $(-0.3,-0.2)$ & $(3.2,3.5)$\\
\bottomrule
\end{tabular}
\end{center}
These bounds give nonzero first coordinates and the stated ordering.
\end{proof}

To match the values and gradients at the cycle positions, set
\begin{equation}\label{eq:profile-data}
 \eta^i=\frac{f^i}{|z_1^i|},\qquad
 d^i=(g^i)^Tv\quad(0\le i<4),
\end{equation}
and prescribe $(h(t^i),h'(t^i))=(\eta^i,d^i)$ for $0\le i<4$.
Add $h(-1)=-1$ and $h(6)=6$. By
Proposition~\ref{prop:geometry-certificate}, the ordered list of
interpolation points and values is
\begin{equation}\label{eq:interpolation-lists}
 (-1,-1),\ (t^2,\eta^2),\ (t^0,\eta^0),\
 (t^3,\eta^3),\ (t^1,\eta^1),\ (6,6).
\end{equation}
The next proposition verifies the conditions of Lemma~\ref{lem:interpolation}
with $m=0.01$, using the enclosures in
Tables~\ref{tab:interpolation-data}--\ref{tab:interpolation-gaps}
of Appendix~\ref{app:data-bounds}.

\begin{proposition}[Interpolation certificate]\label{prop:interpolation-certificate}
The first derivatives prescribed in \eqref{eq:profile-data} satisfy
$d^i>0.01$ for $0\le i<4$.
In the list in \eqref{eq:interpolation-lists}, every two consecutive
entries $(a,A),(c,B)$ satisfy $(B-A)/(c-a)>0.01$.
\end{proposition}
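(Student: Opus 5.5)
The statement reduces to finitely many strict inequalities between explicit algebraic numbers, all of which are rational functions of the single root $b$ from Lemma~\ref{lem:root-certificate}. The plan is to compute closed-form expressions for every quantity involved and then certify each inequality by interval arithmetic on the enclosure $b\in(0.91781039164,0.91781039165)$.

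\textbf{Step 1: express everything in terms of $b$.} Using \eqref{eq:involution-data}, write $J$ entrywise as a rational function of $b$. Then
\begin{itemize}
\item $v_0=(b+3/2,b+1/2)^T$ and $x^0=(I+\rho J^T)v_0/(1-r)$;
\item $x^1=x^0-(1,1/2)^T$, and $x^2$, $x^3$ follow from \eqref{eq:cycle-positions};
\item $z^i=M^{-1}x^i$, where $M^{-1}=\left(\begin{smallmatrix}1&-0.05\\0&1\end{smallmatrix}\right)$;
\item $f^i=(g^i)^Tx^i$, with the gradients from \eqref{eq:cycle-gradients};
\item $d^i=(g^i)^Tv$.
\end{itemize}
In particular $d^0=0.05$, $d^1=1$, $d^2=(Je_1)^Tv=0.05p+q$ and $d^3=(Je_2)^Tv=0.05t-p$. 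Each of these is a rational function of $b$ whose denominator is a power of $b(2b-1)$ times constants, so it is nonvanishing near $b\approx0.918$.

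\textbf{Step 2: the derivative bounds.} The cases $d^0=0.05>0.01$ and $d^1=1>0.01$ are immediate. For $d^2$ and $d^3$, evaluate the rational expressions in interval arithmetic on the enclosure of $b$. Because the enclosure has width $10^{-11}$, the resulting intervals are tiny, and I expect them to lie well above $0.01$. If desired, this can be confirmed analytically by checking the sign of a polynomial numerator on the interval.

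\textbf{Step 3: the secant bounds.} Compute enclosures of $t^i=z_2^i/|z_1^i|$ and $\eta^i=f^i/|z_1^i|$. Proposition~\ref{prop:geometry-certificate} already fixes the signs of $z_1^i$, so the absolute values can be removed with known signs. For each of the five consecutive pairs in \eqref{eq:interpolation-lists}, form the difference quotient. Proposition~\ref{prop:geometry-certificate} gives $c>a$, so the quotient is at least $(B_{\min}-A_{\max})/(c_{\max}-a_{\min})$ whenever the numerator bound is positive. Check that this lower bound exceeds $0.01$. The two end pairs, involving $(-1,-1)$ and $(6,6)$, need $\eta^2+1>0.01(t^2+1)$ and $6-\eta^1>0.01(6-t^1)$; these follow from the enclosures of $\eta^2$ and $\eta^1$.

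\textbf{Main obstacle.} The hardest part is the middle pairs. There the gaps $t^0-t^2$, $t^3-t^0$ and $t^1-t^3$ are only moderately large, and the value increments $\eta$ could be small. So the real content is that the cycle data happen to be monotone with margin. The proof then amounts to recording the enclosures in Tables~\ref{tab:interpolation-data}--\ref{tab:interpolation-gaps} and certifying them rigorously, with exact rational endpoint arithmetic and outward rounding, rather than with floating-point evaluation.
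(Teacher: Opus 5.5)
Your proposal is correct and takes the paper's approach: the paper records $d^0=0.05$, $d^1=1$, $d^2=0.05p+q$, $d^3=0.05t-p$, and certifies the remaining inequalities with the rigorous interval enclosures over the root box in Tables~\ref{tab:interpolation-data}--\ref{tab:interpolation-gaps}, which give $d^2,d^3>0.02$ and all five secant slopes above $0.016$. The tightest case is the gap from $t^2$ to $t^0$, with slope about $0.0161$, and your endpoint bound $(B_{\min}-A_{\max})/(c_{\max}-a_{\min})$ already handles it with the tabulated four-decimal enclosures, since $0.0180/1.1237>0.016$.
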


\begin{proof}
Equations~\eqref{eq:cycle-gradients}, \eqref{eq:profile-coordinates}
and \eqref{eq:profile-data} give
$d^0=0.05$, $d^1=1$, $d^2=0.05p+q$ and $d^3=0.05t-p$.
The enclosures in Table~\ref{tab:interpolation-data} place
$d^2$ and $d^3$ above $0.02$.
Table~\ref{tab:interpolation-gaps} bounds all five consecutive secant
slopes below by $0.016$. Both bounds exceed $0.01$.
\end{proof}

For the line-search tests, define
\begin{equation}\label{eq:uniform-margins}
 \mathsf A^i=f^i+0.03(g^i)^Ts^i-f^{i+1},\qquad
 \mathsf W^i=(g^{i+1})^Ts^i-0.6(g^i)^Ts^i
 \quad(0\le i<4).
\end{equation}
Lemma~\ref{lem:cycle} gives $H^i\succ0$ and $(s^i)^Ty^i>0$.
Thus $s^i\ne0$ and $g^i\ne0$ by \eqref{eq:cycle-recursion}, so
$(g^i)^Ts^i=-(g^i)^TH^ig^i<0$.
Consequently $\mathsf A^i$ bounds the unit-step Armijo margin from below when
$c_1\le0.03$, and $\mathsf W^i$ bounds the Wolfe margin from below when $c_2\ge0.6$.
The next proposition verifies that both margins are positive.

\begin{proposition}[Line-search margins]\label{prop:line-search-certificate}
The margins in \eqref{eq:uniform-margins} satisfy
$\mathsf A^i>0$ and $\mathsf W^i>0$ for $0\le i<4$.
\end{proposition}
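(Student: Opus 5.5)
The plan is to reduce each margin to an explicit expression in the single parameter $b$ and then certify its sign with rigorous enclosures. All cycle data are rational functions of $b$, and $b$ is pinned to an interval of width $10^{-11}$ by Lemma~\ref{lem:root-certificate}. The needed data are $f^i=(g^i)^Tx^i$, $s^i=-H^ig^i$, the gradients in \eqref{eq:cycle-gradients}, the positions in \eqref{eq:cycle-positions}, and the entries $p,q,t$ of $J$ in \eqref{eq:involution-data}.

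The first step removes the unknown function. Using $f^{i+1}=(g^{i+1})^Tx^{i+1}$ and $x^{i+1}=x^i+s^i$ from Lemma~\ref{lem:cycle}, the Armijo margin becomes
\[
\mathsf A^i=-(y^i)^Tx^i+0.03(g^i)^Ts^i-(g^{i+1})^Ts^i .
\]
This identity is exact, and the Wolfe margin $\mathsf W^i$ is already in closed form. For later use, $(g^{i+1})^Ts^i=(g^i)^Ts^i+(s^i)^Ty^i$, with the curvatures known exactly from \eqref{eq:curvatures}. Hence
\[
\mathsf W^i=0.4(g^i)^Ts^i+(s^i)^Ty^i=(s^i)^Ty^i-0.4\,(g^i)^TH^ig^i .
\]

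The second step evaluates the terms for the first two updates. For $i=0,1$ the quantities are simple: $(g^0)^TH^0g^0=1$ and $(g^1)^TH^1g^1=b$. This gives $\mathsf W^0=1/2-0.4>0$ and $\mathsf W^1=81b/32-0.4b>0$ exactly, with no numerics.

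The third step transfers to the last two updates. By \eqref{eq:cycle-step-transformations}, $(g^{i+2})^TH^{i+2}g^{i+2}=\rho(g^i)^TH^ig^i$ and $(s^{i+2})^Ty^{i+2}=\rho(s^i)^Ty^i$. So $\mathsf W^{i+2}=\rho\mathsf W^i$, and all Wolfe margins are positive. The same scaling handles Armijo. We have $x^{i+2}=\rho J^Tx^i$ and $y^{i+2}=Jy^i$, which give $(y^{i+2})^Tx^{i+2}=\rho(y^i)^Tx^i$, and the $g^Ts$ terms scale by $\rho$ as well. Hence $\mathsf A^{i+2}=\rho\mathsf A^i$, and only $\mathsf A^0$ and $\mathsf A^1$ need checking:
\[
\mathsf A^0=-(y^0)^Tx^0-0.03-(1/2-1),\qquad
\mathsf A^1=-(y^1)^Tx^1-0.03b-(81b/32-b).
\]

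The main obstacle is the sign of these two scalars. They depend on $x^0$ through $J$, that is, through the rational formulas \eqref{eq:involution-data} evaluated at the algebraic root $b$. I would first write $(y^0)^Tx^0=x^0_2-x^0_1$ and $(y^1)^Tx^1=(Je_1-e_2)^Tx^1$ as explicit rational functions of $b$. I would then bound them with interval arithmetic over $[0.91781039164,0.91781039165]$, in the same way as the enclosures in Proposition~\ref{prop:geometry-certificate}. If the margins are comfortably positive, as Figure~\ref{fig:main-two-panel} suggests, the coarse interval already suffices. Otherwise I would substitute the polynomial relation $P(b)=0$ to simplify before bounding.
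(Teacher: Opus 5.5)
Your proposal is correct and follows the paper's proof. You reduce to the first two phases via $\mathsf A^{i+2}=\rho\mathsf A^i$ and $\mathsf W^{i+2}=\rho\mathsf W^i$ (getting the Armijo scaling from your rewritten form $-(y^i)^Tx^i+0.03(g^i)^Ts^i-(g^{i+1})^Ts^i$, where the paper uses $f^{i+2}=\rho f^i$), obtain $\mathsf W^0=0.1$ and $\mathsf W^1=341b/160$ exactly, and certify $\mathsf A^0,\mathsf A^1$ by interval evaluation of rational functions of $b$. The paper simplifies with $P(b)=0$ first and obtains $\mathsf A^0\in(0.0575,0.0611)$ and $\mathsf A^1\in(0.0281,0.0312)$, so your direct evaluation over the width-$10^{-11}$ root interval will succeed; your write-up should carry out and state these enclosures rather than leave the sign conditional.
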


\begin{proof}
The definitions \eqref{eq:cycle-gradients} and \eqref{eq:cycle-positions},
together with $J^2=I$, give $f^{i+2}=\rho f^i$ for $i=0,1,2$.
Using \eqref{eq:cycle-gradients} and \eqref{eq:cycle-step-transformations}
in \eqref{eq:uniform-margins} therefore yields
\[
 \mathsf A^{i+2}=\rho\mathsf A^i,\qquad
 \mathsf W^{i+2}=\rho\mathsf W^i\qquad(i=0,1).
\]
Since $\rho>0$, it suffices to check the first two phases.

For the Armijo margins, substitute $f^i=(g^i)^Tx^i$ and the data in
\eqref{eq:initial-metrics}, \eqref{eq:involution-data} and
\eqref{eq:cycle-gradients}--\eqref{eq:cycle-recursion} into
\eqref{eq:uniform-margins}.
Simplifying with $P(b)=0$ from Lemma~\ref{lem:root-certificate} gives
\begin{align*}
 \mathsf A^0&=-\frac{158760b^3+163620b^2-172579b-103680}{27200b},\\
 \mathsf A^1&=\frac{79380b^3+58957b^2-70380b-46080}{13600b}.
\end{align*}
Rational interval evaluation on $[0.9178,0.9179]$, which contains $b$
by Lemma~\ref{lem:root-certificate}, gives
\begin{equation}\label{eq:armijo-enclosures}
 0.0575<\mathsf A^0<0.0611,\qquad 0.0281<\mathsf A^1<0.0312.
\end{equation}

For the Wolfe margins, \eqref{eq:initial-metrics},
\eqref{eq:cycle-gradients} and \eqref{eq:cycle-recursion} give
$(g^0)^Ts^0=-1$ and $(g^1)^Ts^1=-b$.
Combining these identities with \eqref{eq:curvatures} gives
\[
 \mathsf W^0=\frac12-0.4=0.1>0,\qquad
 \mathsf W^1=\frac{81b}{32}-0.4b=\frac{341b}{160}>0.
\]
\end{proof}

Lemma~\ref{lem:interpolation} allows arbitrary second derivatives.
The next lemma uses this freedom to realize the exact cycle with
adjustable Hessians. Their coefficients will be chosen in the next
subsection so that convergence persists under perturbations of the
initialization.

\begin{lemma}[Realization and accepted steps]\label{lem:realization}
For arbitrary $\beta^0,\ldots,\beta^3\in\R$, set
\begin{equation}\label{eq:jets}
 q^i=(-x_2^i,x_1^i)^T,\qquad
 K^i=\frac{\beta^i}{\norm{x^i}^2}q^i(q^i)^T\quad(0\le i<4).
\end{equation}
Then there is a globally Lipschitz, positively homogeneous, semialgebraic
function $\widetilde f:\R^2\to\R$ with $0\notin\Cl\widetilde f(0)$.
It is $C^2$ on $\R^2\setminus\{0\}$ and satisfies
\[
 \widetilde f(x^i)=f^i,\qquad \nabla\widetilde f(x^i)=g^i,\qquad
 \nabla^2\widetilde f(x^i)=K^i\quad(0\le i<4).
\]
For every parameter pair in \eqref{eq:parameter-range},
Algorithm~\ref{alg:bfgs} applied to $\widetilde f$ from $(x^0,H^0)$
accepts the initial trial $\alpha=1$ at every iteration, and
\[
 x^{4j+i}=r^jx^i,\qquad H^{4j+i}=r^jH^i
 \quad(j\ge0,\ 0\le i<4).
\]
\end{lemma}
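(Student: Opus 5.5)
The plan is to build $\widetilde f(x)=F(M^{-1}x)$ with $F$ as in \eqref{eq:homogeneous}, where the profile $h$ comes from Lemma~\ref{lem:interpolation} with $m=0.01$. The interpolation nodes and values are the list \eqref{eq:interpolation-lists}. The first derivatives are $d^i$ at $t^i$ and $1$ at $t=\pm$ the endpoints $-1,6$. The second derivatives $P^i$ at $t^i$ are chosen below to produce $K^i$, and the second derivative is $0$ at the endpoints. Proposition~\ref{prop:interpolation-certificate} supplies the hypotheses. One point needs care: the tails of Lemma~\ref{lem:interpolation} are $t+\eta_1-t_1$ and $t+\eta_N-t_N$, and with the adjoined data $(-1,-1)$, $(6,6)$ both constants vanish. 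Hence $h(t)=t$ for $t\le-2$ and $t\ge7$, which is the identity-tail property used after \eqref{eq:homogeneous}.

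\emph{Regularity and the Clarke bound.} For $z_1\neq0$, $F$ is $C^2$ as a composition. Near a nonzero point of the axis $z_1=0$, the ratio $|z_2/z_1|$ is large, so $F(z)=z_2$ there. Thus $F$, and hence $\widetilde f$, is $C^2$ on $\R^2\setminus\{0\}$. Positive homogeneity is immediate. Semialgebraicity follows because $h$ is semialgebraic and $F$ is obtained from $h$ by piecewise rational operations.

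For $z_1>0$ the gradient is $\nabla F(z)=(h(t)-th'(t),\,h'(t))$ with $t=z_2/z_1$, and similarly for $z_1<0$. On the tails $h-th'=0$, and on the compact middle interval both components are bounded. So $\nabla F$ is bounded on the dense differentiability set, and $F$ is globally Lipschitz; the same holds for $\widetilde f$. Moreover $\partial_2F\ge0.01$ at every point of differentiability other than the origin, because it equals $h'(t)$ or $1$. By the chain rule identity $v^T\nabla\widetilde f(Mz)=\partial_2F(z)$, every limiting gradient $\xi$ at $0$ satisfies $v^T\xi\ge0.01$. This inequality passes to the closed convex hull, so $0\notin\Cl\widetilde f(0)$.

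\emph{Matching the jets.} First, the value: $\widetilde f(x^i)=|z_1^i|h(t^i)=|z_1^i|\eta^i=f^i$. Next, the gradient. We need $\nabla F(z^i)=M^Tg^i$. The second component of $M^Tg^i$ is $(g^i)^TMe_2=d^i=h'(t^i)$. The first component is forced by the Euler identity of Proposition~\ref{prop:homogeneity}. Indeed, both vectors $w$ satisfy $(z^i)^Tw=f^i$: for $M^Tg^i$ this is $(x^i)^Tg^i=f^i$, and for $\nabla F(z^i)$ it is $F(z^i)=f^i$. Since $z_1^i\ne0$, the two vectors agree, and therefore $\nabla\widetilde f(x^i)=g^i$.

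Finally, the Hessian. By Proposition~\ref{prop:homogeneity}, $\nabla^2\widetilde f(x)x=0$ at nonzero $x$. In two dimensions, a symmetric matrix annihilating $x^i$ is a multiple of $q^i(q^i)^T$. A direct computation gives the multiple. With $\sigma=\operatorname{sign}z_1^i$, one finds $\nabla^2F(z^i)=\frac{h''(t^i)}{|z_1^i|}w^i(w^i)^T$, where $w^i=(-\sigma t^i,1)^T$. Conjugating by $M^{-1}$ then gives $\nabla^2\widetilde f(x^i)=\gamma^ih''(t^i)\,q^i(q^i)^T$ for an explicit $\gamma^i\ne0$. Choosing $P^i=\beta^i/(\gamma^i\norm{x^i}^2)$ produces $K^i$. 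This bookkeeping, together with the identity-tail argument for $C^2$ smoothness near the axis, is the step I expect to require the most care; everything else is assembly.

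\emph{Accepted steps and the scaled cycle.} At $(x^i,H^i)$ the direction is $p=-H^ig^i=s^i$, and $(g^i)^Ts^i<0$. At the trial $\alpha=1$ the Armijo test \eqref{eq:armijo} requires
\[
\widetilde f(x^{i+1})=f^{i+1}<f^i+c_1(g^i)^Ts^i .
\]
Since $c_1\le0.03$, the margin $f^i+c_1(g^i)^Ts^i-f^{i+1}$ is at least $\mathsf A^i>0$. The trial point $x^{i+1}$ is nonzero, so $\widetilde f$ is differentiable there. The Wolfe test \eqref{eq:wolfe} then requires $(g^{i+1})^Ts^i>c_2(g^i)^Ts^i$; since $c_2\ge0.6$, the margin is at least $\mathsf W^i>0$. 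Both margin bounds come from Proposition~\ref{prop:line-search-certificate}. Hence $\alpha=1$ is accepted, and Lemma~\ref{lem:cycle} gives $H^{i+1}=\mathcal B(H^i,s^i,y^i)$ together with the prescribed next state. Every $g^i\ne0$, so the algorithm never stops.

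After four steps the state is $(rx^0,rH^0)$. Applying Lemma~\ref{lem:bfgs-scaling} with $\tau=r^j$ shows that the same trial decisions and the same scaled states recur, because $\widetilde f$ is $C^2$ at all nonzero iterates. Induction on $j$ then yields $x^{4j+i}=r^jx^i$ and $H^{4j+i}=r^jH^i$.
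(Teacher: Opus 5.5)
Your proposal is correct and follows essentially the same route as the paper's proof. You interpolate $h$ by Lemma~\ref{lem:interpolation}, recover the first gradient component from the Euler identity, pin down the Hessian by its rank-one structure, and conclude with the margins of Proposition~\ref{prop:line-search-certificate} and Lemma~\ref{lem:bfgs-scaling}; your factor $\gamma^i$ reproduces the paper's choice $h''(t^i)=|z_1^i|v^TK^iv$, which the paper obtains by comparing $(2,2)$ entries.

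The one step you leave implicit is the final phase $i=3$. There $\widetilde f(x^4)=f^4$ and $\nabla\widetilde f(x^4)=g^0$ are not interpolation conditions, but they follow from positive homogeneity because $x^4=rx^0$.
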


\begin{proof}
\proofstep{1}{Construction of the objective}
The vectors $q^i$ in \eqref{eq:jets} are perpendicular to $x^i$, so
$K^ix^i=0$. Complete the interpolation data by prescribing
\begin{equation}\label{eq:second-derivative-data}
 h''(t^i)=k^i,
 \qquad k^i=|z_1^i|v^TK^iv\quad(0\le i<4).
\end{equation}
Apply Lemma~\ref{lem:interpolation} to the list in
\eqref{eq:interpolation-lists}, using the bounds in
Proposition~\ref{prop:interpolation-certificate}, with $m=0.01$, first
derivative one and second derivative zero at the anchors $-1,6$, and the data
\eqref{eq:profile-data} and \eqref{eq:second-derivative-data} at the other
points. Since the lemma allows arbitrary second derivatives, it applies
for every choice of the coefficients $\beta^i$.
The values at the extreme anchors are $-1$ and $6$, so the affine-tail formulas give
$h(t)=t$ for $t\notin(-2,7)$.
Define $F$ by \eqref{eq:homogeneous} and set $\widetilde f(x)=F(M^{-1}x)$.
These functions are positively homogeneous and semialgebraic.
For $z_1\ne0$, put $\sigma=\operatorname{sign}(z_1)$ and
$t=z_2/|z_1|$. Differentiation gives
\begin{equation}\label{eq:perspective-jets}
 \nabla F(z)=\binom{\sigma(h(t)-th'(t))}{h'(t)},
 \qquad
 \nabla^2F(z)=\frac{h''(t)}{|z_1|}
 \begin{pmatrix}t^2&-\sigma t\\-\sigma t&1\end{pmatrix}.
\end{equation}
The identity tails just established give $F(z)=z_2$
in a neighborhood of every nonzero point of the axis $z_1=0$.
Thus $F$ and $\widetilde f$ are $C^2$ away from zero.

\proofstep{2}{Matching the prescribed values and derivatives}
For $0\le i<4$, the prescriptions in \eqref{eq:profile-data} give
$\widetilde f(x^i)=F(z^i)=f^i$ and $\partial_2F(z^i)=d^i$.
Proposition~\ref{prop:homogeneity} gives $(z^i)^T\nabla F(z^i)=f^i$.
By \eqref{eq:profile-coordinates}, \eqref{eq:transformed-cycle-positions}
and \eqref{eq:profile-data},
$x^i=z_1^iMe_1+z_2^iMe_2$ and $d^i=(g^i)^TMe_2$.
Since $f^i=(g^i)^Tx^i$ and $z_1^i\ne0$ by
Proposition~\ref{prop:geometry-certificate}, the first component is
\[
 \partial_1F(z^i)
 =\frac{f^i-z_2^id^i}{z_1^i}
 =\frac{(g^i)^T(x^i-z_2^iMe_2)}{z_1^i}
 =(g^i)^TMe_1.
\]
Together with $\partial_2F(z^i)=d^i=(g^i)^TMe_2$, this gives
$\nabla F(z^i)=M^Tg^i$.
The chain rule then yields
$\nabla\widetilde f(x^i)=M^{-T}\nabla F(z^i)=g^i$.

For the Hessian, the symmetric matrix $M^TK^iM$ satisfies $(M^TK^iM)z^i=0$,
because $Mz^i=x^i$ and $K^ix^i=0$ by \eqref{eq:jets}.
At $z=z^i$, the notation in \eqref{eq:perspective-jets} gives
$z^i=|z_1^i|(\sigma,t)^T$ and $\sigma^2=1$, so multiplying the displayed Hessian
by $z^i$ gives $\nabla^2F(z^i)z^i=0$.
Since $z_1^i\ne0$ by Proposition~\ref{prop:geometry-certificate},
the two rows of $Bz^i=0$ for a symmetric matrix $B$ give
\[
 B_{12}=-\frac{z_2^i}{z_1^i}B_{22},\qquad
 B_{11}=\left(\frac{z_2^i}{z_1^i}\right)^2B_{22}.
\]
Thus $B$ is determined by $B_{22}$.
The $(2,2)$ entries of $M^TK^iM$ and $\nabla^2F(z^i)$ both equal
$v^TK^iv=k^i/|z_1^i|$ by \eqref{eq:profile-coordinates},
\eqref{eq:second-derivative-data} and \eqref{eq:perspective-jets}.
Hence $\nabla^2F(z^i)=M^TK^iM$.
The chain rule for $\widetilde f(x)=F(M^{-1}x)$ therefore gives
$\nabla^2\widetilde f(x^i)=K^i$.

\proofstep{3}{Lipschitz continuity and nonstationarity}
Continuity on the unit circle and homogeneity give
$\widetilde f(x)=\norm{x}\widetilde f(x/\norm{x})\to0=\widetilde f(0)$
as $x\to0$. Since $\widetilde f$ is $C^2$ away from zero,
the gradient scaling identity in \eqref{eq:homogeneity-identities} gives
\[
 \norm{\nabla\widetilde f(x)}
 \le L:=\max_{\norm{u}=1}\norm{\nabla\widetilde f(u)}<\infty
 \qquad(x\ne0).
\]
Integrating along any segment avoiding zero gives
$|\widetilde f(x)-\widetilde f(y)|\le L\norm{x-y}$.
If the segment contains zero, split it there and pass to the limit
using continuity. The two lengths add to $\norm{x-y}$, so the same
inequality holds, proving global Lipschitz continuity.

By Lemma~\ref{lem:interpolation} and \eqref{eq:perspective-jets},
$\partial_2F(z)\ge0.01$ for every $z\ne0$,
including on the nonzero axis, where $F(z)=z_2$ locally.
With $v=Me_2$ from \eqref{eq:profile-coordinates}, the chain rule gives
$\nabla\widetilde f(x)^Tv=\partial_2F(M^{-1}x)\ge0.01$
at every nonzero point.
All limiting gradients at zero, and their closed convex hull, satisfy
the same inequality. Thus $0\notin\Cl\widetilde f(0)$.

\proofstep{4}{Line-search decisions}
By Proposition~\ref{prop:homogeneity}, $x^4=rx^0$ from
Lemma~\ref{lem:cycle}, and $g^4=g^0$ from \eqref{eq:cycle-gradients}, we have
$\widetilde f(x^4)=f^4$ and $\nabla\widetilde f(x^4)=g^4=g^0$.
Proposition~\ref{prop:line-search-certificate} gives $\mathsf A^i,\mathsf W^i>0$
for the margins in \eqref{eq:uniform-margins}.
As noted after that definition, these are lower bounds for the unit-step
Armijo and Wolfe margins throughout \eqref{eq:parameter-range}.
Since $p^i=s^i$ and $x^i+s^i=x^{i+1}$, every cycle state accepts
its initial unit trial.
For $\tau>0$, Lemma~\ref{lem:bfgs-scaling} shows that a step from
$(\tau x^i,\tau H^i)$ makes the same line-search decisions and ends
at $(\tau x^{i+1},\tau H^{i+1})$.
Since $(x^4,H^4)=(rx^0,rH^0)$ by Lemma~\ref{lem:cycle}, induction
over successive four-step blocks gives the asserted formulas.
\end{proof}

\subsection{Stability of the shrinking cycle}\label{sec:stability}
Lemma~\ref{lem:realization} proves convergence to the origin for the exact
cycle and its positive rescalings. To obtain an open set of initial
states, we must show that this behavior persists under perturbations of
both $x$ and $H$. We divide the state after $4j$ steps by $r^j$,
the decay factor of the exact cycle, and show that these normalized
states remain bounded.
A related use of homogeneity and attracting cycles in normalized
coordinates appears in a construction for stochastic subgradient methods
\cite[Propositions~3.1 and~4.1]{Li2026}.

Since Lemma~\ref{lem:realization} gives an accepted step size of one along
the cycle, we study the corresponding fixed-step map. The final proof
verifies acceptance throughout an invariant neighborhood.
Represent a state $(x,H)\in\R^2\times\SPD$ by
$\omega=(x_1,x_2,H_{11},H_{12},H_{22})\in\R^5$
and write $\omega_*=(x_1^{0},x_2^{0},1,1/2,b/4+1/8)$ for the base state
from \eqref{eq:initial-metrics} and \eqref{eq:cycle-positions}.
For any fixed choice of $\beta^0,\ldots,\beta^3$ and the corresponding
function $\widetilde f$ constructed in Lemma~\ref{lem:realization}, define
the unit-step BFGS map by
\begin{equation}\label{eq:unit-step-map}
 \begin{aligned}
  s&=-H\nabla\widetilde f(x),\qquad
  y=\nabla\widetilde f(x+s)-\nabla\widetilde f(x),\qquad
  \widehat H=\mathcal B(H,s,y),\\
  \mathcal T(\omega)&=(x_1+s_1,x_2+s_2,
    \widehat H_{11},\widehat H_{12},\widehat H_{22}),
 \end{aligned}
\end{equation}
where $\mathcal B$ is given by \eqref{eq:update}.
Its domain is the open set
\begin{equation}\label{eq:unit-step-domain}
 \mathcal D=\{\omega\in\R^5:H\succ0,\ x\ne0,\ x+s\ne0,\ s^Ty>0\}.
\end{equation}
Write $\mathcal T^j$ for the $j$-fold composition wherever defined,
with $\mathcal T^0=\mathrm{id}$. The next lemma records the smoothness and
scaling properties needed to study the shrinking cycle.

\begin{lemma}[Local BFGS dynamics]\label{lem:local-dynamics}
The map $\mathcal T$ is $C^1$ on $\mathcal D$.
For every $\omega\in\mathcal D$ and every $\tau>0$,
\[
 \mathcal T(\tau\omega)=\tau\mathcal T(\omega),\qquad
 \mathcal T^4(\omega_*)=r\omega_*.
\]
\end{lemma}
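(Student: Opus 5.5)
Three claims need proof: $C^1$ regularity on $\mathcal D$, homogeneity of $\mathcal T$, and the cycle identity $\mathcal T^4(\omega_*)=r\omega_*$.

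\emph{Regularity.} By Lemma~\ref{lem:realization}, $\widetilde f$ is $C^2$ on $\R^2\setminus\{0\}$. Hence $x\mapsto\nabla\widetilde f(x)$ is $C^1$ on $\{x\ne0\}$. On $\mathcal D$ we have $x\ne0$, so $s=-H\nabla\widetilde f(x)$ is $C^1$ in $\omega$, since it is a product of the linear map $\omega\mapsto H$ with a $C^1$ function. Because $x+s\ne0$ on $\mathcal D$, the composition $\nabla\widetilde f(x+s)$ is also $C^1$, and so is $y$. The update \eqref{eq:update} is a rational function of $(H,s,y)$ whose only denominator is $s^Ty$, which is positive on $\mathcal D$. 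Thus $\widehat H$, and therefore $\mathcal T$, is $C^1$. Openness of $\mathcal D$ follows from continuity of these same quantities.

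\emph{Homogeneity.} Fix $\omega\in\mathcal D$ and $\tau>0$, so $\tau\omega$ corresponds to $(\tau x,\tau H)$. The gradient scaling identity in Proposition~\ref{prop:homogeneity} gives $\nabla\widetilde f(\tau x)=\nabla\widetilde f(x)$. Hence the new displacement is $\tau s$, the new next point is $\tau(x+s)\ne0$, and the new gradient difference is $y$, again by gradient scaling. Then $(\tau s)^Ty=\tau s^Ty>0$, so $\tau\omega\in\mathcal D$, which makes the identity meaningful. Identity \eqref{eq:bfgs-transformations} with $A=I$ gives $\mathcal B(\tau H,\tau s,y)=\tau\mathcal B(H,s,y)$. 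Reading off coordinates yields $\mathcal T(\tau\omega)=\tau\mathcal T(\omega)$.

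\emph{Cycle identity.} Lemma~\ref{lem:cycle} and Lemma~\ref{lem:realization} give $\nabla\widetilde f(x^i)=g^i$ for $0\le i<4$. They also give $\nabla\widetilde f(x^4)=g^4=g^0$, by gradient scaling at $x^4=rx^0$. Lemma~\ref{lem:cycle} further gives $s^i=-H^ig^i$, $x^{i+1}=x^i+s^i$ and $H^{i+1}=\mathcal B(H^i,s^i,y^i)$. It remains to check that each intermediate state $\omega^i$ built from $(x^i,H^i)$ lies in $\mathcal D$:
\begin{itemize}
\item $H^i\succ0$ holds by Lemma~\ref{lem:cycle};
\item $x^i\ne0$ and $x^{i+1}\ne0$, since $z^i=M^{-1}x^i$ has $z_1^i\ne0$ by Proposition~\ref{prop:geometry-certificate}, and $x^4=rx^0\ne0$;
\item $(s^i)^Ty^i>0$ by \eqref{eq:curvatures}.
\end{itemize}
Therefore $\mathcal T(\omega^i)=\omega^{i+1}$ for $0\le i<4$. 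Since $\omega^0=\omega_*$ and $\omega^4$ corresponds to $(rx^0,rH^0)$, we get $\mathcal T^4(\omega_*)=r\omega_*$.

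There is no real obstacle here. The only point needing care is confirming that the domain conditions, in particular nonvanishing of each iterate, hold at every cycle state, so that the compositions are defined.
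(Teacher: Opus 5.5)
Your proof is correct and follows the paper's argument. Regularity comes from $\widetilde f$ being $C^2$ away from the origin together with $s^Ty>0$, homogeneity from gradient scaling plus the $A=I$ case of the update scaling identity, and the cycle identity from running $\mathcal T$ along the exact cycle, with your explicit check that each cycle state lies in $\mathcal D$ (via $z_1^i\ne0$, $x^4=rx^0\ne0$ and the curvature values) filling in a step the paper leaves implicit.
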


\begin{proof}
On $\mathcal D$ in \eqref{eq:unit-step-domain},
$\widetilde f$ is $C^2$ near $x$ and $x+s$,
and $s^Ty>0$ keeps the denominators in \eqref{eq:update}
nonzero. Hence \eqref{eq:unit-step-map} defines a $C^1$ map there.
By the gradient scaling in Proposition~\ref{prop:homogeneity},
replacing $\omega$ by $\tau\omega$ multiplies $s$ by $\tau$ and leaves
$y$ unchanged. This preserves $H\succ0$, the conditions $x\ne0$ and
$x+s\ne0$, and the inequality $s^Ty>0$.
Hence $\tau\omega\in\mathcal D$ for every $\tau>0$.
The update scaling in Lemma~\ref{lem:bfgs-scaling} and
\eqref{eq:unit-step-map} then give
$\mathcal T(\tau\omega)=\tau\mathcal T(\omega)$.
Applying $\mathcal T$ along the exact cycle in Lemma~\ref{lem:realization} gives
$\mathcal T^4(\omega_*)=r\omega_*$.
\end{proof}

For a sequence $\omega^{k+1}=\mathcal T(\omega^k)$, normalize each
four-step block by setting $\xi^j=r^{-j}\omega^{4j}$.
Write $\mathcal D_4=\operatorname{dom}(\mathcal T^4)$, an open set
invariant under positive scaling by Lemma~\ref{lem:local-dynamics}.
On this set, define the normalized return map
\begin{equation}\label{eq:fixed-return-map}
 \mathcal F(\omega)=r^{-1}\mathcal T^4(\omega).
\end{equation}
The same lemma gives, whenever the updates are defined,
\begin{equation}\label{eq:block-recursion}
 \xi^{j+1}=\mathcal F(\xi^j),\qquad
 \omega^{4j+i}=r^j\mathcal T^i(\xi^j)
 \quad(j\ge0,\ 0\le i<4).
\end{equation}
Thus, if the normalized states $\xi^j$ form a bounded
sequence, then $\omega^{4j}=r^j\xi^j\to0$, since
$r=64/81\in(0,1)$ by the construction in Section~\ref{sec:cycle}.
The map $\mathcal F$ is $C^1$ and positively homogeneous on $\mathcal D_4$,
and $\mathcal F(a\omega_*)=a\omega_*$ for every $a>0$.
We will show that, for $\xi^0$ sufficiently close to $\omega_*$, the
sequence $(\xi^j)$ converges to a point of the fixed ray
$\{a\omega_*:a>0\}$ and is therefore bounded.

The Hessian coefficients in \eqref{eq:jets} control the derivative of
$\mathcal F$ without changing the exact cycle or its line-search decisions,
by Lemma~\ref{lem:realization}. To obtain attraction, choose
\begin{equation}\label{eq:hessian-coefficients}
 (\beta^0,\beta^1,\beta^2,\beta^3)=(1.9,1.6,-4.1,0.2).
\end{equation}
Fix the corresponding objective $\widetilde f$ and maps
$\mathcal T,\mathcal F$. The next lemma proves attraction to the fixed
ray: deviations from the ray contract, while changes along it are summable.

\begin{lemma}[Attraction to the fixed ray]\label{lem:attraction}
There are arbitrarily small open neighborhoods $U\subset\mathcal D_4$
of $\omega_*$ such that $\mathcal F(U)\subset U$ and, for every $\xi^0\in U$, the iterates
$\xi^{j+1}=\mathcal F(\xi^j)$ converge to a positive multiple of $\omega_*$.
\end{lemma}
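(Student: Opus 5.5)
The plan is to study $\mathcal F$ near the fixed ray through its derivative at $\omega_*$. Homogeneity gives $\mathcal F(a\omega)=a\mathcal F(\omega)$. Differentiating $\mathcal F(a\omega_*)=a\omega_*$ in $a$ gives $D\mathcal F(\omega_*)\omega_*=\omega_*$, so $1$ is an eigenvalue with eigenvector $\omega_*$. For the choice \eqref{eq:hessian-coefficients}, I expect the remaining four eigenvalues of $D\mathcal F(\omega_*)$ to lie strictly inside the unit disk.

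To make this usable, I would pick a complementary four-dimensional subspace $E$ invariant under $D\mathcal F(\omega_*)$, or simply a fixed linear functional $\ell$ with $\ell(\omega_*)=1$ and $\ell\circ D\mathcal F(\omega_*)=\ell$, namely the left eigenvector. Write
\[
 \omega=a(\omega_*+e),\qquad a=\ell(\omega)>0,\quad e\in E=\ker\ell .
\]
By homogeneity, the map in the transverse coordinate $e$ is
\[
 G(e)=\mathcal F(\omega_*+e)/\ell(\mathcal F(\omega_*+e))-\omega_* .
\]
It satisfies $G(0)=0$, and $DG(0)$ equals the restriction of $D\mathcal F(\omega_*)$ to $E$, projected. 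If a weighted norm $|\cdot|_W$ on $E$ satisfies $\|DG(0)\|_W\le\theta<1$, then continuity of $D\mathcal F$ (Lemma~\ref{lem:local-dynamics}) gives $|G(e)|_W\le\theta'|e|_W$ for $|e|_W\le\delta$, with some $\theta'<1$. Hence $e^j\to0$ geometrically.

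The scale obeys
\[
 a^{j+1}=a^j\,\ell(\mathcal F(\omega_*+e^j)) .
\]
Since $\ell(\mathcal F(\omega_*))=1$ and $\mathcal F$ is $C^1$, this factor equals $1+O(|e^j|)$. The product $\prod_j(1+O(\theta'^j))$ therefore converges to a positive limit, so $a^j\to a_\infty>0$ and $\xi^j\to a_\infty\omega_*$.

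For the invariant neighborhood, I would take a cone-type set
\[
 U=\{a(\omega_*+e):\ a\in(1-\epsilon,1+\epsilon),\ |e|_W<\delta\}.
\]
This set need not be exactly invariant, because $a$ drifts by a factor of at most $\prod_j(1+C\theta'^j\delta)$. To fix this, I would define $U$ via a Lyapunov-type quantity such as
\[
 V(\omega)=|\log a|+\kappa|e|_W .
\]
Choosing $\kappa$ large makes $V$ nonincreasing, since the drift $C|e|$ in $\log a$ is absorbed by the decrease $\kappa(1-\theta')|e|$. The sublevel sets $\{V<\eta\}$ then give arbitrarily small open invariant neighborhoods. Shrinking $\eta$ keeps them inside $\mathcal D_4$, since $\mathcal D_4$ is open and scale-invariant.

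The main obstacle is the quantitative step: certifying that $\theta<1$ and that the $C^1$ remainder stays below $1-\theta$ on an explicit ball. Here $b$ is known only by enclosure, and $D\mathcal F$ involves $\nabla^2\widetilde f$ at the cycle points, which is exact by Lemma~\ref{lem:realization}. But the modulus of continuity of $\nabla^2\widetilde f$ near the cycle points depends on the interpolant $h$, which is only known qualitatively. I would first compute $D\mathcal F(\omega_*)$ in interval arithmetic as a product of four Jacobians, using the chain rule through \eqref{eq:unit-step-map} with Hessians $K^i$. I would then find a norm, e.g.\ from a Lyapunov matrix $P$ satisfying $A^TPA\prec P$, that certifies the strict contraction.

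For the remainder I would avoid explicit constants. The contraction is strict at $\omega_*$, so plain continuity of $D\mathcal F$ yields some small $\delta$. This suffices because the lemma only asserts the existence of small neighborhoods. The computer-assisted part is thus confined to the finite eigenvalue or Lyapunov inequality at $\omega_*$.
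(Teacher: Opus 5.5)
Your proposal is correct and follows essentially the same route as the paper. Like the paper, you split off the fixed ray $\{a\omega_*\}$, certify a norm in which the transverse linearization at $\omega_*$ contracts (the paper uses $\|SAS^{-1}\|_\infty\le0.78$ for the projected derivative on $W=\{u_3=0\}$), rely only on continuity of $D\mathcal F$ near the ray, and obtain the invariant set from a weighted sum $|\xi_3-1|+M\|u\|_\perp$ that absorbs the summable drift along the ray. The differences are cosmetic: you use a multiplicative normalization $\omega=a(\omega_*+e)$ with $|\log a|$ instead of the additive split $\xi=\xi_3\omega_*+u$, and you do not actually need the exact left eigenvector, since any functional with $\ell(\omega_*)=1$ (the paper takes $\ell(\omega)=\omega_3$) gives a projected derivative with the same spectrum, which is more convenient when $b$ is known only by enclosure.
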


\begin{proof}
\proofstep{1}{Integral identities}
Since $(\omega_*)_3=1$, write each state uniquely as
$\xi=\xi_3\omega_*+u$, where
$u\in W:=\{u\in\R^5:u_3=0\}$.
For $\xi'=\mathcal F(\xi)$, set $u'=\xi'-\xi'_3\omega_*$.
Suppose $\xi_3>0$ and the segment $\xi(t)=\xi_3\omega_*+tu$,
$0\le t\le1$, lies in $\mathcal D_4$.
Since $\mathcal F$ is $C^1$ and
$\mathcal F(\xi_3\omega_*)=\xi_3\omega_*$, the fundamental theorem of
calculus applied to its third component gives
\[
 \xi'_3-\xi_3
 =(\mathcal F(\xi))_3-(\mathcal F(\xi_3\omega_*))_3
 =\int_0^1(D\mathcal F(\xi(t))u)_3\,dt.
\]
Using this identity and applying the same theorem to each component of
$\mathcal F$ gives
\begin{align*}
 u'&=\xi'-\xi'_3\omega_*\\*
   &=\bigl(\mathcal F(\xi)-\mathcal F(\xi_3\omega_*)\bigr)
     -(\xi'_3-\xi_3)\omega_*\\*
   &=\int_0^1D\mathcal F(\xi(t))u\,dt
     -\left(\int_0^1(D\mathcal F(\xi(t))u)_3\,dt\right)\omega_*\\*
   &=\int_0^1\bigl(D\mathcal F(\xi(t))u
     -(D\mathcal F(\xi(t))u)_3\omega_*\bigr)\,dt.
\end{align*}
The vector in the last integrand belongs to $W$, since its third
coordinate is zero. For any norm $\norm{\cdot}_\perp$ on $W$, the
triangle inequalities for these integrals yield
\begin{equation}\label{eq:transverse-integral-bounds}
\begin{aligned}
 |\xi'_3-\xi_3|
 &\le\int_0^1 |(D\mathcal F(\xi(t))u)_3|\,dt,\\
 \norm{u'}_\perp
 &\le\int_0^1
 \norm{D\mathcal F(\xi(t))u-(D\mathcal F(\xi(t))u)_3\omega_*}_\perp\,dt.
\end{aligned}
\end{equation}

\proofstep{2}{Operator-norm bounds}
For each fixed $z\in\mathcal D_4$, define the linear maps
$L_z:W\to W$ and $\ell_z:W\to\R$ by
\begin{equation}\label{eq:transverse-operators}
 L_z(v)=D\mathcal F(z)v-(D\mathcal F(z)v)_3\omega_*,\qquad
 \ell_z(v)=(D\mathcal F(z)v)_3.
\end{equation}
The two integrands in \eqref{eq:transverse-integral-bounds} are therefore
$|\ell_{\xi(t)}(u)|$ and $\norm{L_{\xi(t)}(u)}_\perp$.
We choose a norm on $W$ in which $L_{\omega_*}$ is a contraction,
then use continuity to bound both operators uniformly near $\omega_*$.
Define $R:\R^5\to\R^4$ by $R(v)=(v_1,v_2,v_4,v_5)$.
Since $R|_W:W\to\R^4$ is an isomorphism, there is a unique matrix
$A\in\R^{4\times4}$ satisfying
\begin{equation}\label{eq:transverse-derivative}
 R\bigl(L_{\omega_*}(v)\bigr)=AR(v)\qquad(v\in W).
\end{equation}
For a vector $y$, write $\norm{y}_\infty=\max_i|y_i|$.
For a matrix $B=(b_{ij})$, the induced matrix norm is
\begin{equation}\label{eq:matrix-infinity-norm}
 \norm{B}_\infty
 =\sup_{y\ne0}\frac{\norm{By}_\infty}{\norm{y}_\infty}
 =\max_i\sum_j|b_{ij}|.
\end{equation}
Proposition~\ref{prop:direct-stability}, proved in
Section~\ref{app:stability-inequalities}, supplies an invertible matrix
$S$ with $\norm{SAS^{-1}}_\infty\le0.78$.
Equip $W$ with the norm $\norm{v}_\perp=\norm{SR(v)}_\infty$.
We use operator norms induced by this norm on $W$ and the absolute
value on $\R$. By \eqref{eq:transverse-derivative} and the isomorphism
$SR:W\to\R^4$,
\[
 \norm{L_{\omega_*}}_{\mathrm{op}}
 =\sup_{v\in W\setminus\{0\}}
   \frac{\norm{SAR(v)}_\infty}{\norm{SR(v)}_\infty}
 =\norm{SAS^{-1}}_\infty\le0.78<1.
\]
Choose $\gamma\in(0.78,1)$ and
$C>\norm{\ell_{\omega_*}}_{\mathrm{op}}$.
Because $D\mathcal F$ is continuous, both $L_z$ and $\ell_z$ depend
continuously on $z$ in these operator norms.
Choose $\varepsilon\in(0,1)$ and $\delta>0$ small enough that every
state $z$ satisfying $|z_3-1|\le\varepsilon$ and
$\norm{z-z_3\omega_*}_\perp\le\delta$ belongs to $\mathcal D_4$ and obeys
\[
 \norm{L_z}_{\mathrm{op}}\le\gamma,\qquad
 \norm{\ell_z}_{\mathrm{op}}\le C.
\]
If $|\xi_3-1|\le\varepsilon$ and $\norm{u}_\perp\le\delta$, then
$(\xi(t))_3=\xi_3$ and
$\norm{\xi(t)-\xi_3\omega_*}_\perp=t\norm{u}_\perp\le\delta$.
Thus every point of the segment satisfies these operator bounds, and
\begin{align*}
 |\ell_{\xi(t)}(u)|
 &\le\norm{\ell_{\xi(t)}}_{\mathrm{op}}\norm{u}_\perp
 \le C\norm{u}_\perp,\\*
 \norm{L_{\xi(t)}(u)}_\perp
 &\le\norm{L_{\xi(t)}}_{\mathrm{op}}\norm{u}_\perp
 \le\gamma\norm{u}_\perp
 \qquad(0\le t\le1).
\end{align*}
Substituting into \eqref{eq:transverse-integral-bounds} gives
\begin{equation}\label{eq:transverse-bounds}
 \norm{u'}_\perp\le\gamma\norm{u}_\perp,\qquad
 |\xi'_3-\xi_3|\le C\norm{u}_\perp
\end{equation}
throughout this range of $\xi_3$ and $u$.

\proofstep{3}{An invariant neighborhood}
Choose $M\ge\max\{C/(1-\gamma),\varepsilon/\delta\}$ and set
\[
 U=\{\xi\in\R^5:|\xi_3-1|+M\norm{\xi-\xi_3\omega_*}_\perp<\varepsilon\},
\]
which is an open neighborhood of
$\omega_*$ in $\R^5$, and $M\ge\varepsilon/\delta$ gives
$U\subset\mathcal D_4$.
Decreasing $\varepsilon$ and $\delta$ makes $U$ arbitrarily small.
By \eqref{eq:transverse-bounds},
\[
 |\xi'_3-1|+M\norm{u'}_\perp
 \le |\xi_3-1|+(C+M\gamma)\norm{u}_\perp
 \le |\xi_3-1|+M\norm{u}_\perp<\varepsilon,
\]
so $\mathcal F(U)\subset U$.
For $\xi^0\in U$, write $\xi^j=\xi_3^j\omega_*+u^j$.
Iterating \eqref{eq:transverse-bounds} gives
$\norm{u^j}_\perp\le\gamma^j\norm{u^0}_\perp$ and
\[
 \sum_{j=0}^{\infty}|\xi_3^{j+1}-\xi_3^j|
 \le \frac{C\norm{u^0}_\perp}{1-\gamma}<\infty.
\]
Hence $u^j\to0$, and $\xi_3^j$ converges. Since $\xi^j\in U$ for
every $j$, the definition of $U$ gives $\xi_3^j>1-\varepsilon$.
Its limit is therefore at least $1-\varepsilon>0$.
\end{proof}

\subsection{Completion of the main proof}\label{sec:completion}
We choose an invariant neighborhood on which every four-step block satisfies
the line-search tests and stays inside the ball of radius ten.
We then define the coercive objective and prove by induction that its
BFGS iterates follow the scaled blocks.
\begin{proof}[Proof of Theorem~\ref{thm:main}]
\proofstep{1}{The initial set}
Use the objective $\widetilde f$ fixed in Section~\ref{sec:stability}
with the coefficients \eqref{eq:hessian-coefficients}.
For $\xi\in\mathcal D_4$, let $(\widehat x^i,\widehat H^i)$ be the state
represented by $\mathcal T^i(\xi)$ and set
$\widehat g^i=\nabla\widetilde f(\widehat x^i)$ for $0\le i\le4$.
For $0\le i<4$, write
$\widehat s^i=\widehat x^{i+1}-\widehat x^i=-\widehat H^i\widehat g^i$.
These quantities depend on $\xi$ and are continuous on $\mathcal D_4$
by Lemma~\ref{lem:local-dynamics}.
The domain \eqref{eq:unit-step-domain} ensures smooth evaluation points,
positive definite matrices and
$(\widehat s^i)^T(\widehat g^{i+1}-\widehat g^i)>0$.
In particular, $\widehat g^i\ne0$ and $(\widehat g^i)^T\widehat s^i<0$ for $0\le i<4$.

At $\xi=\omega_*$, Lemmas~\ref{lem:cycle} and~\ref{lem:realization}
give $\|\widehat x^i\|<10$ for $0\le i\le4$, and
Proposition~\ref{prop:line-search-certificate} gives strict Armijo and
Wolfe inequalities at $(c_1,c_2)=(0.03,0.6)$.
These finitely many strict inequalities persist on an open neighborhood
of $\omega_*$ by continuity.
Since Lemma~\ref{lem:attraction} supplies arbitrarily small invariant
neighborhoods, choose one such $U\subset\mathcal D_4$ with
$\mathcal F(U)\subset U$ so that, for every $\xi\in U$,
\begin{equation}\label{eq:initial-block-conditions}
 \begin{aligned}
  \|\widehat x^i\|&<10 &&(0\le i\le4),\\
  \widetilde f(\widehat x^{i+1})
  &<\widetilde f(\widehat x^i)+0.03(\widehat g^i)^T\widehat s^i
  &&(0\le i<4),\\
  (\widehat g^{i+1})^T\widehat s^i
  &>0.6(\widehat g^i)^T\widehat s^i &&(0\le i<4).
 \end{aligned}
\end{equation}
Because $(\widehat g^i)^T\widehat s^i<0$, these tests also hold for every
parameter pair in \eqref{eq:parameter-range}.
Define the initial set in the theorem by
\begin{equation}\label{eq:initial-set}
 \mathcal U=\{(x,H)\in\R^2\times\SPD:
 (x_1,x_2,H_{11},H_{12},H_{22})\in U\}.
\end{equation}
The state-coordinate map is a homeomorphism onto its image, so
$\mathcal U$ is open and contains the cycle state $(x^0,H^0)$
constructed in Section~\ref{sec:cycle}.

\proofstep{2}{The coercive objective}
Define
\begin{equation}\label{eq:coercive}
 f(x)=\widetilde f(x)+\max\{0,\norm{x}^2-100\}.
\end{equation}
By Lemma~\ref{lem:realization}, $\widetilde f$ has a global Lipschitz
constant $L$. Since $\widetilde f(0)=0$, we have
$\widetilde f(x)\ge-L\norm{x}$, and hence
\[
 f(x)\ge\norm{x}^2-L\norm{x}-100\longrightarrow+\infty
 \qquad\text{as }\norm{x}\to\infty.
\]
Thus $f$ is coercive. It is locally Lipschitz and semialgebraic, and
agrees with $\widetilde f$ on the ball of radius ten.
In particular, $\Cl f(0)=\Cl\widetilde f(0)$, so
Lemma~\ref{lem:realization} gives $0\notin\Cl f(0)$.

\proofstep{3}{Induction and convergence}
Fix a parameter pair in \eqref{eq:parameter-range} and an initial state
in $\mathcal U$ from \eqref{eq:initial-set}, represented by $\omega^0\in U$.
Define $\xi^0=\omega^0$ and $\xi^{j+1}=\mathcal F(\xi^j)$ using
\eqref{eq:fixed-return-map}.
Since $\mathcal F(U)\subset U$, this sequence is well defined, with $\xi^j\in U$
for every $j\ge0$.
We prove by induction that the actual algorithm applied to $f$ is defined
at every iteration, accepts every unit trial, and has states
\begin{equation}\label{eq:actual-block-states}
 \omega^{4j+i}=r^j\mathcal T^i(\xi^j)
 \qquad(j\ge0,\ 0\le i\le4).
\end{equation}

The initial identity is $\omega^0=\xi^0$.
Suppose the algorithm has reached $\omega^{4j}=r^j\xi^j$ with all
preceding unit trials accepted. Keep $j$ fixed and, for $0\le i\le4$,
let $(\widehat x^i,\widehat H^i)$ be the state
represented by $\mathcal T^i(\xi^j)$.
We show that the state identity propagates from phase $i$ to phase $i+1$.
Suppose the state at $k=4j+i$, $0\le i<4$, is
$\omega^k=r^j\mathcal T^i(\xi^j)$, so
$(x^k,H^k)=(r^j\widehat x^i,r^j\widehat H^i)$.
At $(\widehat x^i,\widehat H^i)$, the tests in
\eqref{eq:initial-block-conditions} accept the unit trial for $\widetilde f$,
and \eqref{eq:unit-step-map} gives the next state
$(\widehat x^{i+1},\widehat H^{i+1})$.
Lemma~\ref{lem:bfgs-scaling}, with $\tau=r^j$, therefore gives an accepted
unit step from $(r^j\widehat x^i,r^j\widehat H^i)$ to
$(r^j\widehat x^{i+1},r^j\widehat H^{i+1})$.

Both evaluation points are nonzero by \eqref{eq:unit-step-domain}, and
their norms are below ten by \eqref{eq:initial-block-conditions} and
$0<r^j\le1$. Thus \eqref{eq:coercive} makes $f$ and $\widetilde f$
agree locally at the current point and the unit trial.
The objective values and gradients used by Algorithm~\ref{alg:bfgs}
coincide, so it accepts the same trial and produces
$\omega^{k+1}=r^j\mathcal T^{i+1}(\xi^j)$.
By \eqref{eq:unit-step-map}--\eqref{eq:unit-step-domain}, each unscaled phase
has positive curvature and hence a nonzero gradient. These properties persist under positive
scaling, excluding termination or breakdown.

Applying this implication successively for $i=0,1,2,3$ shows that the
iterations $4j,\ldots,4j+3$ are well defined and accept their unit trials,
and that \eqref{eq:actual-block-states} holds throughout this block.
By \eqref{eq:fixed-return-map}, the state after these four iterations is
\[
 \omega^{4(j+1)}=r^j\mathcal T^4(\xi^j)
 =r^{j+1}\mathcal F(\xi^j)=r^{j+1}\xi^{j+1},
\]
which closes the induction on $j$.
Finally, combining \eqref{eq:actual-block-states} with the position bounds
in \eqref{eq:initial-block-conditions} yields
$\|x^{4j+i}\|<10r^j$ for $j\ge0$ and $0\le i<4$.
Since $0<r<1$, the iterates converge to the origin.
\end{proof}

\section{Concluding remarks}\label{sec:conclusion}
Random initialization does not rule out convergence to a
non-Clarke-stationary point for the BFGS method with Armijo-Wolfe line search.
The two-dimensional example admits a nonempty open set of such initial
states in the joint space of points and inverse-Hessian estimates,
uniformly over the line-search parameters in Theorem~\ref{thm:main}.
The objective is coercive, locally Lipschitz and semialgebraic, and
Gaussian--Wishart initialization assigns positive probability to this
open set. These properties give a negative resolution of
\cite[Challenge~7.1(2)]{LewisOverton2013}.
Positive homogeneity near the origin sustains a shrinking cycle, while
attraction extends the failure to nearby initializations.
Rigorous computational bounds certify the finite inequalities used in
the construction and its analytical stability argument.

An important remaining question is whether Algorithm~\ref{alg:bfgs}
has only Clarke-stationary cluster points almost surely for convex,
locally Lipschitz, semialgebraic objectives with bounded sublevel sets.
For nonconvex objectives with the same regularity and boundedness
assumptions, which alternative line-search rules or safeguards on the
inverse-Hessian estimates can recover this guarantee?

\section*{Acknowledgments}
The author thanks Adrian Lewis for his helpful feedback and suggestions
on the presentation of this work. The author used Codex to assist with the mathematical
construction, numerical experiments, verification code, and manuscript
preparation. GPT-6 Pro was consulted on developing and simplifying the
example and proofs. AI-assisted material was refined through repeated
discussion, mathematical and computational checks, and editing.
The author independently verified all mathematical
arguments and assumes responsibility for all content.

\appendix

\section{Rational certificates for the finite inequalities}\label{app:certificates}
This appendix provides two certificates. Appendix \ref{app:data-bounds} supplies the bounds
in Proposition~\ref{prop:interpolation-certificate}, which ensure that
Lemma~\ref{lem:interpolation} can construct $h$.
Appendix \ref{app:stability-inequalities} verifies the contraction needed for attraction to the fixed
ray in Lemma~\ref{lem:attraction}.
Verification details and reproducible computations are provided in the
accompanying GitHub repository linked on the first page of the paper.

\paragraph{Certified arithmetic.}
We use standard interval arithmetic with outward rounding;
see \cite[Sections~2.3--2.4 and~3.2]{MooreKearfottCloud2009}.
Every terminating decimal denotes an exact rational number. An interval
$[a,b]$ represents all real numbers between its rational endpoints.
Addition, multiplication and negation are evaluated by
\[
 [a,b]+[c,d]=[a+c,b+d],\qquad
 [a,b][c,d]=[\min(ac,ad,bc,bd),\max(ac,ad,bc,bd)],
\]
with $-[a,b]=[-b,-a]$. If $0\notin[a,b]$, its reciprocal is
$[a,b]^{-1}=[1/b,1/a]$; division by an interval containing zero is rejected.
With $Q=2^{256}$, round every input and computed interval outward by
\begin{equation}\label{eq:interval-rounding}
 [a,b]\longmapsto
 \left[\frac{\lfloor Qa\rfloor}{Q},\frac{\lceil Qb\rceil}{Q}\right].
\end{equation}
Printed decimal endpoints are again rounded outward.
For interval matrices, membership and arithmetic are
understood entrywise, with matrix products formed by interval sums and
products.

For both certificates, write the enclosure from
Lemma~\ref{lem:root-certificate} as
\begin{equation}\label{eq:direct-root-box}
 b\in Y=[0.91781039164, 0.91781039165].
\end{equation}

\subsection{Interpolation bounds}\label{app:data-bounds}
Consider the data constructed in Section \ref{sec:cycle}. 
Substituting \eqref{eq:involution-data} into \eqref{eq:cycle-positions}
and simplifying gives the expression used in these calculations:
\begin{equation}\label{eq:short-xzero}
 x^0=\left(
 -\frac{9(8978b^2-6527b-2304)}{5440b},\,
 \frac{9(1764b^2-2671b+1488)}{2720}\right)^T,
\end{equation}
where $b\in Y$ by \eqref{eq:direct-root-box}.
Use the positions from
\eqref{eq:short-xzero} and \eqref{eq:cycle-positions} and the gradients from
\eqref{eq:cycle-gradients}.
Using the coordinate enclosures in the proof of
Proposition~\ref{prop:geometry-certificate}, evaluate
$t^i=z_2^i/|z_1^i|$ and the data in \eqref{eq:profile-data}.
Their enclosing intervals are
given in Table~\ref{tab:interpolation-data}.
\begin{table}[H]
\setlength{\belowcaptionskip}{6pt}
\centering
\caption{Certified interpolation data at the four cycle positions.}
\label{tab:interpolation-data}
\small
\begin{tabular}{cccc}
\toprule
$i$ & $t^i$ & $\eta^i$ & $d^i$\\
\midrule
0 & [1.4026, 1.4027] & [1.0701, 1.0702] & [0.0499, 0.0501] \\
1 & [4.7721, 4.7722] & [4.7721, 4.7722] & [1.0000, 1.0000] \\
2 & [0.2790, 0.2791] & [1.0520, 1.0521] & [0.0291, 0.0292] \\
3 & [3.3299, 3.3300] & [4.3954, 4.3955] & [0.9887, 0.9888] \\
\bottomrule
\end{tabular}
\end{table}

For the list in \eqref{eq:interpolation-lists}, number the five
consecutive gaps $j=0,\ldots,4$ from left to right. For adjacent entries
$(a,A),(c,B)$, evaluate $\ell=c-a$ and $(B-A)/\ell$ by the interval
rules above.
The resulting enclosures, used in the proof of
Proposition~\ref{prop:interpolation-certificate}, are given in
Table~\ref{tab:interpolation-gaps}.
\begin{table}[H]
\setlength{\belowcaptionskip}{6pt}
\centering
\caption{Certified distances and secant slopes between consecutive interpolation points.}
\label{tab:interpolation-gaps}
\begin{tabular}{ccc}
\toprule
$j$ & $\ell$ & $(B-A)/\ell$\\
\midrule
0 & [1.2790, 1.2791] & [1.6043, 1.6044] \\
1 & [1.1236, 1.1237] & [0.0161, 0.0162] \\
2 & [1.9272, 1.9273] & [1.7253, 1.7254] \\
3 & [1.4421, 1.4422] & [0.2612, 0.2613] \\
4 & [1.2278, 1.2279] & [0.9999, 1.0001] \\
\bottomrule
\end{tabular}
\end{table}

\subsection{Contraction toward the fixed ray}\label{app:stability-inequalities}
For the norm used in the proof of Lemma~\ref{lem:attraction}, choose
\begin{equation}\label{eq:stability-coordinates}
 S=\begin{pmatrix}
 464&1077&4501&-1708\\
 1366&2793&10000&552\\
 -448&-1058&-4343&1648\\
 16&293&1787&-2075
 \end{pmatrix}.
\end{equation}
The next proposition certifies contraction in this norm.

\begin{proposition}\label{prop:direct-stability}
For the matrix $A$ in \eqref{eq:transverse-derivative} and the matrix $S$
in \eqref{eq:stability-coordinates}, $S$ is invertible and
$\|SAS^{-1}\|_\infty\le0.78$.
\end{proposition}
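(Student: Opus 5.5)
The plan is to compute an interval enclosure of the $4\times4$ matrix $A$ directly from the chain rule and then certify the norm bound with the interval arithmetic described at the start of Appendix~\ref{app:certificates}.

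\textbf{Step 1: derivative along the cycle.} By \eqref{eq:fixed-return-map}, $D\mathcal F(\omega_*)=r^{-1}D\mathcal T(\omega^3)D\mathcal T(\omega^2)D\mathcal T(\omega^1)D\mathcal T(\omega^0)$. Here $\omega^i$ is the cycle state $(x^i,H^i)$. Each factor $D\mathcal T(\omega^i)$ is obtained by differentiating \eqref{eq:unit-step-map} at an exact cycle state, and its ingredients are all explicit. We have $\nabla\widetilde f(x^i)=g^i$ and $\nabla\widetilde f(x^{i+1})=g^{i+1}$ by Lemma~\ref{lem:realization}. The Hessians are $K^i$ from \eqref{eq:jets} with the coefficients \eqref{eq:hessian-coefficients}, and $\nabla^2\widetilde f(x^4)=r^{-1}K^0$ by Proposition~\ref{prop:homogeneity}.

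Concretely, perturbing $(x,H)$ by $(\delta x,\delta H)$ gives
\[
\delta s=-\delta H\,g^i-H^iK^i\delta x,\qquad
\delta y=K^{i+1}(\delta x+\delta s)-K^i\delta x.
\]
Differentiating \eqref{eq:update} in $(H,s,y)$ then gives $\delta\widehat H$. This produces each $5\times5$ Jacobian as a rational expression in $b$, $p$, $q$, $t$, the $x^i$, and the entries of $J$. Two reductions keep the work small:
\begin{itemize}
\item By \eqref{eq:cycle-state-transformation} and Lemma~\ref{lem:bfgs-scaling}, the factors at phases $2,3$ are conjugates of those at phases $0,1$ by the linear map $\omega\mapsto(\rho J^Tx,\rho J^THJ)$. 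Only two Jacobians need to be computed.
\item Homogeneity (Lemma~\ref{lem:local-dynamics}) gives $D\mathcal F(\omega_*)\omega_*=\omega_*$. This serves as a consistency check on the enclosure.
\end{itemize}

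\textbf{Step 2: extracting $A$.} The matrix $A$ in \eqref{eq:transverse-derivative} comes from $D\mathcal F(\omega_*)$ in three moves. Restrict to $W$ by taking columns $1,2,4,5$. Subtract the third row times $R(\omega_*)$, which implements $v\mapsto v-(\cdot)_3\omega_*$. Then delete the third row. Evaluating everything in interval arithmetic over $b\in Y$ from \eqref{eq:direct-root-box} yields an interval matrix $\mathbf A\ni A$.

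\textbf{Step 3: the norm certificate.}
\begin{itemize}
\item Show $S$ is invertible by computing its exact rational determinant and checking it is nonzero. Then compute $S^{-1}$ exactly over $\mathbb Q$.
\item Form the interval product $S\mathbf AS^{-1}$.
\item For each row, bound $\sum_j|b_{ij}|$ from above, using the larger of the absolute values of each interval's endpoints. Check that every row sum is at most $0.78$.
\end{itemize}
Since $A\in\mathbf A$, this gives $\norm{SAS^{-1}}_\infty\le0.78$ by \eqref{eq:matrix-infinity-norm}.

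\textbf{Main obstacle.} The hard part is Step 1, deriving and then reliably evaluating the Jacobian of the BFGS update composed over four phases. The concern is intervals widening so much that the final row sums exceed $0.78$. To control this, I would first simplify the phase-$0$ and phase-$1$ Jacobians symbolically, reducing polynomial expressions in $b$ modulo $P(b)=0$ as in Proposition~\ref{prop:line-search-certificate}, before switching to intervals. The very tight box $Y$, of width $10^{-11}$, should then keep the widening negligible.
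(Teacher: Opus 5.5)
Your plan follows the paper's proof: phase Jacobians at the exact cycle built from $\delta g=K^i\delta x$, $\delta s=-(\delta H)g^i-H^iK^i\delta x$ and $\delta y=K^{i+1}\delta\widehat x-\delta g$ with $K^4=K^0/r$; the product $D^3D^2D^1D^0$; the extraction formula for $A$; and an interval enclosure of $S\widehat A(\lambda)S^{-1}$ over $Y$ with row sums of magnitudes. However, your first ``reduction'' in Step~1 is false, and following it would certify a bound for the wrong matrix.

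Lemma~\ref{lem:bfgs-scaling} and \eqref{eq:cycle-state-transformation} relate only the cycle points, matrices and gradients. The Jacobian $D\mathcal T(\omega^i)$ also involves the second-order data $K^i,K^{i+1}$ of $\widetilde f$. Write $\Phi(\delta x,\delta H)=(\rho J^T\delta x,\rho J^T\delta H J)$ and compare the position rows of $D\mathcal T(\omega^2)$ and $\Phi\,D\mathcal T(\omega^0)\Phi^{-1}$. They are $(I-H^2K^2)\delta x-(\delta H)g^2$ and $(I-J^TH^0K^0J^T)\delta x-(\delta H)g^2$, respectively. Since $H^2=\rho J^TH^0J$, they agree for all $\delta x$ only if $K^2=\rho^{-1}JK^0J^T$, that is, only if $\widetilde f$ were $J^T$-invariant to second order at the cycle points. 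The phase-$3$ conjugacy similarly needs $K^4=\rho^{-1}JK^2J^T$ through $\delta y$, which is the same condition because $K^4=K^0/r$ and $J^2=I$.

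With the coefficients \eqref{eq:hessian-coefficients} this condition fails. By \eqref{eq:jets}, $\beta^0=1.9>0$ makes $K^0$ a nonzero positive semidefinite matrix, while $\beta^2=-4.1<0$ makes $K^2$ a nonzero negative semidefinite one. Congruence by $J$ preserves semidefiniteness, so $K^2\ne\rho^{-1}JK^0J^T$. The $\beta^i$ are chosen independently, and the objective is not $J$-symmetric at second order.

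Consequently, computing only $D^0,D^1$ and conjugating encloses the return-map derivative of a different, hypothetical $J$-symmetric objective, not the matrix $A$ in the statement. Your consistency check $D\mathcal F(\omega_*)\omega_*=\omega_*$ cannot detect this, because each conjugated factor $\Phi\,D\mathcal T(\omega^i)\Phi^{-1}$ still maps $\omega^{i+2}$ to $\omega^{i+3}$; the ray direction only uses $K^ix^i=0$.

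The fix is to compute all four factors $D^0,\dots,D^3$, each with its own Hessian pair $(K^i,K^{i+1})$, as the paper does. With that change the rest of your argument matches the paper's. The paper obtains row sums at most $(0.357,0.440,0.776,0.715)$, after checking that the denominators $\lambda$, $2\lambda-1$, $\norm{x^i(\lambda)}^2$ and $s^i(\lambda)^Ty^i(\lambda)$ stay away from zero on $Y$.
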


\begin{proof}
We first compute $A$ by differentiating at the exact cycle, then enclose
the resulting rational expression by interval arithmetic over $Y$.

\proofstep{1}{Differentiating one BFGS step}
Recall the cycle data $g^i,H^i,x^i$ from \eqref{eq:initial-metrics}
and \eqref{eq:cycle-gradients}--\eqref{eq:cycle-positions}, the steps $s^i$
and gradient differences $y^i$ from \eqref{eq:cycle-recursion}, and
the Hessians $K^i$ from \eqref{eq:jets}, with the coefficients
\eqref{eq:hessian-coefficients}. Set $K^4=K^0/r$.
Lemma~\ref{lem:realization} and the Hessian scaling in
Proposition~\ref{prop:homogeneity} give
$K^i=\nabla^2\widetilde f(x^i)$ for $0\le i\le4$.

In the state coordinates of \eqref{eq:unit-step-map}, write
$\omega^i=(x_1^i,x_2^i,H_{11}^i,H_{12}^i,H_{22}^i)$ for $0\le i\le4$.
Thus $\omega^0=\omega_*$ and $\mathcal T(\omega^i)=\omega^{i+1}$
for $0\le i<4$ by Lemmas~\ref{lem:cycle} and~\ref{lem:realization}.
For a state variation $\delta\omega\in\R^5$, set
\[
 \delta x=\bigl((\delta\omega)_1,(\delta\omega)_2\bigr)^T,\qquad
 \delta H=\begin{pmatrix}
 (\delta\omega)_3&(\delta\omega)_4\\
 (\delta\omega)_4&(\delta\omega)_5
 \end{pmatrix}.
\]
Fix $0\le i<4$. The symbol $\delta$ denotes differentiation at
$\varepsilon=0$ along $\omega=\omega^i+\varepsilon\delta\omega$.
Write $\widehat\omega=\mathcal T(\omega)$ for the updated state,
whose point and matrix are $\widehat x=x+s$ and
$\widehat H=\mathcal B(H,s,y)$, and set $g=\nabla\widetilde f(x)$.
Since $x^i+s^i=x^{i+1}$, differentiation gives
\begin{equation}\label{eq:state-variation}
 \begin{aligned}
 \delta g&=K^i\delta x,&
 \delta s&=-((\delta H)g^i+H^i\delta g),\\
 \delta\widehat x&=\delta x+\delta s,&
 \delta y&=K^{i+1}\delta\widehat x-\delta g.
 \end{aligned}
\end{equation}
All quantities in the following formulas are evaluated at this cycle
state, so $H=H^i$, $s=s^i$ and $y=y^i$. Introduce the scalars $c,d$,
the vector $t$ and the matrices $N,V$ by
\[
 c=s^Ty,\qquad t=Hy,\qquad d=y^THy,\qquad
 N=ts^T+st^T,\qquad V=ss^T.
\]
Expansion of \eqref{eq:update} gives
\begin{equation}\label{eq:certificate-bfgs-expansion}
 \widehat H=H-\frac{N}{c}+\frac{c+d}{c^2}V.
\end{equation}
The product and quotient rules give
\begin{equation}\label{eq:bfgs-differential}
 \begin{aligned}
 \delta c&=(\delta s)^Ty+s^T\delta y,\qquad
 \delta t=(\delta H)y+H\delta y,\\
 \delta d&=2(\delta y)^Tt+y^T(\delta H)y,\qquad
 \delta V=(\delta s)s^T+s(\delta s)^T,\\
 \delta N&=(\delta t)s^T+t(\delta s)^T
              +(\delta s)t^T+s(\delta t)^T,\\
 \delta\widehat H
 &=\delta H-\frac{\delta N}{c}+\frac{N\delta c}{c^2}
   +\frac{c+d}{c^2}\delta V+\left(\frac{\delta c+\delta d}{c^2}
          -\frac{2(c+d)\delta c}{c^3}\right)V.
 \end{aligned}
\end{equation}
\proofstep{2}{The four-step derivative}
Set $D^i=D\mathcal T(\omega^i)\in\R^{5\times5}$.
Equations~\eqref{eq:state-variation} and \eqref{eq:bfgs-differential} give
\begin{equation}\label{eq:phase-jacobian}
 \delta\widehat\omega=D^i\delta\omega
 =\bigl((\delta\widehat x)_1,(\delta\widehat x)_2,
 (\delta\widehat H)_{11},(\delta\widehat H)_{12},
 (\delta\widehat H)_{22}\bigr)^T.
\end{equation}
Its $j$th column is obtained by taking $\delta\omega$ to be the $j$th
standard basis vector of $\R^5$.
Since $\omega^0=\omega_*$ and $\mathcal F=r^{-1}\mathcal T^4$ by
\eqref{eq:fixed-return-map}, the chain rule gives
\begin{equation}\label{eq:four-step-jacobian}
 D_{\mathrm{tot}}:=D^3D^2D^1D^0
 =D\mathcal T^4(\omega_*),\qquad
 D\mathcal F(\omega_*)=r^{-1}D_{\mathrm{tot}}.
\end{equation}
Recall $W=\{v\in\R^5:v_3=0\}$, $R(v)=(v_1,v_2,v_4,v_5)$ and the
map $L_{\omega_*}$ defined in \eqref{eq:transverse-operators}.
By \eqref{eq:four-step-jacobian},
\[
 L_{\omega_*}(v)
 =r^{-1}\bigl(D_{\mathrm{tot}}v-(D_{\mathrm{tot}}v)_3\omega_*\bigr)
 \qquad(v\in W).
\]
Using \eqref{eq:transverse-derivative} and writing
$(\kappa_1,\kappa_2,\kappa_3,\kappa_4)=(1,2,4,5)$ for the coordinates
retained by $R$, we obtain
\begin{equation}\label{eq:transverse-extension}
 A_{ij}
 =\frac{(D_{\mathrm{tot}})_{\kappa_i\kappa_j}
       -(\omega_*)_{\kappa_i}(D_{\mathrm{tot}})_{3\kappa_j}}{r}
 \qquad(1\le i,j\le4).
\end{equation}
\proofstep{3}{Entry bounds and row sums}
The formulas above express every entry of $A$ rationally in $b$.
To enclose them, replace $b$ by an independent variable $\lambda\in Y$
in the formulas defining $g^i,H^i,x^i,K^i,s^i,y^i$, using
\eqref{eq:short-xzero} for $x^0$.
An argument $\lambda$ denotes this substitution, for example
$x^i(\lambda)$ and $s^i(\lambda)$.
Evaluate \eqref{eq:state-variation}, \eqref{eq:bfgs-differential} and
\eqref{eq:phase-jacobian} with these data to obtain $D^i(\lambda)$.
Form $D_{\mathrm{tot}}(\lambda)=D^3(\lambda)D^2(\lambda)D^1(\lambda)D^0(\lambda)$
and set
\[
 \omega^0(\lambda)=\bigl(x_1^0(\lambda),x_2^0(\lambda),
 1,1/2,\lambda/4+1/8\bigr).
\]
In the right-hand side of \eqref{eq:transverse-extension}, replace
$D_{\mathrm{tot}}$ and $\omega_*$ by $D_{\mathrm{tot}}(\lambda)$ and
$\omega^0(\lambda)$ to define $\widehat A(\lambda)$.
Since $D_{\mathrm{tot}}(b)=D_{\mathrm{tot}}$ and $\omega^0(b)=\omega_*$,
we have $\widehat A(b)=A$. The derivative interpretation established
above applies at $\lambda=b$.

Apart from fixed nonzero constants, the denominator factors in
\eqref{eq:involution-data}, \eqref{eq:cycle-positions} and
\eqref{eq:jets} are $\lambda$, $2\lambda-1$ and
$\norm{x^i(\lambda)}^2$. Formula~\eqref{eq:bfgs-differential} also
divides by powers of $s^i(\lambda)^Ty^i(\lambda)$.
For every $\lambda\in Y$, we have $\lambda>0.9$ and
$2\lambda-1>0.8$, and interval evaluation gives
\[
 \norm{x^i(\lambda)}^2>0.7,\qquad
 s^i(\lambda)^Ty^i(\lambda)>0.4\qquad(0\le i<4).
\]
The remaining factors $r=64/81$ and $1-r=17/81$ are fixed and positive.

Since $\det S=-22\,230\,591\,050\ne0$, $S$ is invertible.
Evaluate $S\widehat A(\lambda)S^{-1}$ over $Y$ by interval arithmetic
and denote the resulting $4\times4$ interval matrix by $\mathbf M$.
Every denominator interval excludes zero, so the inclusion
principle~\cite[equation~(5.17)]{Rump2010} ensures that each interval
$\mathbf M_{ij}$ contains $(S\widehat A(\lambda)S^{-1})_{ij}$ for
every $\lambda\in Y$.

For an interval $[a,c]$, set $\operatorname{mag}([a,c])=\max\{|a|,|c|\}$.
Rounding $\operatorname{mag}(\mathbf M_{ij})$ upward to three decimal
places gives the nonnegative matrix $\Gamma$:
\begin{equation}\label{eq:stability-entry-magnitudes}
 \begin{gathered}
 |(S\widehat A(\lambda)S^{-1})_{ij}|
 \le\operatorname{mag}(\mathbf M_{ij})\le\Gamma_{ij}\qquad(\lambda\in Y),\\
 \Gamma=\begin{pmatrix}
 0.133 & 0.164 & 0.049 & 0.011\\
 0.240 & 0.091 & 0.094 & 0.015\\
 0.042 & 0.006 & 0.521 & 0.207\\
 0.024 & 0.002 & 0.211 & 0.478
 \end{pmatrix}.
 \end{gathered}
\end{equation}
Summing each row of this displayed matrix gives
\begin{equation}\label{eq:direct-stability-bound}
 \left(\sum_{j=1}^4|(S\widehat A(\lambda)S^{-1})_{ij}|\right)_{i=1}^4
 \le(0.357,\,0.440,\,0.776,\,0.715)\qquad(\lambda\in Y),
\end{equation}
where the inequality is componentwise. Since $\widehat A(b)=A$, the
row-sum formula \eqref{eq:matrix-infinity-norm} gives
$\|SAS^{-1}\|_\infty\le0.776<0.78$.
\end{proof}

\bibliographystyle{alpha}
\bibliography{references}
\end{document}